\theoremstyle{plain}
\newtheorem{theorem}{\indent\sc Theorem}[section]
\newtheorem{lemma}[theorem]{\indent\sc Lemma}
\theoremstyle{definition}
\numberwithin{equation}{section}
\title[On the rigidity of spacelike self-shrinker in $\mathbb{R}^{n+p}_p$]{Rigidity Results for Spacelike Self-Shrinkers \\ via Different Maximum Principles}
\author[W.F.C. Barboza]{Weiller F.Chaves Barboza$^{1}$}
\address{
$^1$ Weiller F. Chaves barboza \\ Departamento de Matem\'atica\\
Universidade Federal de Campina Grande\\
58.429-970 Campina Grande, Para\'iba\\
Brazil}
\email{weiller.felipe@professor.ufcg.edu.br}
\subjclass[2020]{Primary 53C42. Secondary 53E10.}
\keywords{Pseudo-Euclidean space; complete spacelike self-shrinker; drift Laplacian; convergence to zero at infinity; Omori--Yau type maximum principles; rigidity results.}
\thanks{$^1$Corresponding author.}
\begin{document}

\maketitle

\begin{abstract}
In this work, we establish several rigidity results for spacelike self-shrinkers immersed in the pseudo-Euclidean space $\mathbb{R}^{n+p}_p$. Under suitable boundedness conditions on either the mean curvature vector or the second fundamental form, we apply different versions of Omori--Yau type maximum principles due to Qiu~\cite{Qiu:2021}, Chen and Qiu~\cite{QH:16}, and Alías, Caminha, and Nascimento~\cite{Alias-Caminha-Nascimento:2019} to show that such self-shrinkers must be spacelike hyperplanes. These results contribute to the broader classification of spacelike self-shrinkers under natural geometric assumptions.

\end{abstract}

\section{Introduction}\label{introduction}
\justifying

Let $X:M^n\rightarrow\mathbb R_p^{n+p}$ be a spacelike submanifold (which means that it has a Riemannian induced metric) in the $(n+p)$-dimensional pseudo-Euclidean space $\mathbb R^{n+p}_p$ of index $p$. The {\em spacelike mean curvature flow} (SMCF) associated to $X$ is a family of smooth spacelike submanifolds $X_t=X(t,\cdot):M^n\rightarrow\mathbb R_p^{n+p}$ with corresponding images $M^n_t=X_t(M^n)$ satisfying the following evolution equation
\begin{equation*}\label{eq-int:1}
	\left\{\begin{array}{l}\dfrac{\partial X}{\partial t}=\vec{H}\\ X(0,x)=X(x)
	\end{array}\right.
\end{equation*}
on some time interval, where $\vec{H}$ stands for the (non-normalized) mean curvature vector of the spacelike submanifold $M^n_t$ in $\mathbb R_p^{n+p}$. Self-similar shrinkers to the above SMCF play an important role in understanding the behavior of the flow since they often occur as singularities. $M^n$ is called a {\em spacelike self-shrinker} of $\mathbb{R}^{n+p}_{p}$ if it satisfies a system of quasilinear elliptic PDE of the second order
\begin{equation}\label{self}
	\vec{H}=-\frac{1}{2}X^{\perp},
\end{equation}
where $X^\perp$ denotes the normal part of $X$. The study of self-shrinkers in Euclidean spaces is deeply connected to minimal submanifold theory \cite{Angenent1992} and \cite{ColdingMinicozzi2012}. Extensive research has been conducted on classification and uniqueness problems for these geometric objects, including both self-shrinkers and translating solitons in Euclidean settings (See \cite{AbreschLanger1986}, \cite{CaoLi2013}, \cite{Wang2011}, \cite{Wang2011b}, \cite{NevesTian2013}, \cite{Qiu:2021}). Rigidity properties of complete spacelike submanifolds have attracted considerable attention in geometric analysis. The subject originated with Calabi's \cite{Calabi1968} seminal work on extremal hypersurfaces in $\mathbb{R}^{n+1}_1$, proving their planar nature in dimensions $n \leq 4$. A complete solution for all dimensions was later achieved by Cheng--Yau \cite{ChengYau1976}, contrasting sharply with Euclidean analogues. These results were ultimately extended to higher codimensional settings through the work of Jost--Xin \cite{JostXin2001}. 

The aforementioned considerations naturally motivate the study of rigidity problems for spacelike self-shrinkers, where several important results have been established: Chau--Chen--Yuan \cite{ChengYau1976} and Huang--Wang \cite{HuangWang2011} independently proved, through different methods, that spacelike entire graphical Lagrangian self-shrinkers must be flat under quadratic lower bounds on the Hessian of the potential function; Ding--Wang \cite{Ding-Wang:2010} subsequently obtained rigidity results under weaker subexponential decay conditions, while Ding--Xin \cite{DingXin2014} completely removed the auxiliary assumptions from \cite{ChauChenYuan2012,HuangWang2011}, definitively establishing flatness; additional classification and rigidity theorems under various geometric constraints were further developed in \cite{Adames2014,Liu-Xin:2016}.

In the current literature there are several works devoted to study the rigidity of spacelike self-shrinkers of $\mathbb{R}^{n+p}_{p}$. For instance, Ding and Wang~\cite{Ding-Wang:2010} investigated self-shrinking graphs with high codimension in $\mathbb{R}^{n+p}_{p}$ and obtained several Bernstein type results. Later on, Liu and Xin~\cite{Liu-Xin:2016} proved some rigidity results under minor growth conditions in terms of the mean curvature or the image of Gauss maps, and one of which is as follows.

\begin{theorem}[H.Q. Liu, Y.L. Xin \cite{Liu-Xin:2016}]
Let $M^n$ be a spacelike self-shrinker in $\mathbb{R}^{n+p}_p$, which is closed with respect to the Euclidean topology. If there exists a constant $\alpha < 1/8$ such that $||H||^2\leq e^{\alpha z}$, then $M^n$ is an affine $n-$plane.
\end{theorem}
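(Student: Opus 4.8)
The plan is to carry out a weighted (drift) analysis adapted to the shrinker equation~(\ref{self}) and to reduce everything to proving that the mean curvature vector vanishes identically. First I would introduce the drift Laplacian $\mathcal{L}=\Delta-\tfrac12\langle X,\nabla\,\cdot\,\rangle$, which is the operator naturally attached to~(\ref{self}) and is self-adjoint with respect to the weighted volume $e^{-\frac14\langle X,X\rangle}\,dM$. Working in a local orthonormal tangent frame and using $\Delta X=\vec H=-\tfrac12 X^{\perp}$, a direct computation yields the structural identity $\mathcal{L}\!\left(\tfrac12\langle X,X\rangle\right)=n-\tfrac12\langle X,X\rangle$, together with $\langle X^{\perp},X^{\perp}\rangle=-4\|H\|^2$, the latter encoding that the normal bundle of a spacelike submanifold is timelike (negative definite). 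These are the pseudo-Euclidean counterparts of the Colding--Minicozzi identities and form the backbone of the argument.

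Next I would establish a Bochner/Simons-type drift inequality for $\|H\|^2$, schematically of the form $\mathcal{L}\|H\|^2=2\,\|\nabla^{\perp}\vec H\|^2+2\,\|H\|^2+\mathcal{R}$, where $\mathcal{R}$ collects terms that are quadratic in the shrinker data and, at the borderline, grow like $\|H\|^4$. The exponential bound $\|H\|^2\le e^{\alpha z}$ enters precisely here: since the self-shrinker weight is $e^{-\frac14 z}$ and the slowest-decaying contribution is quadratic in $\|H\|^2$, the relevant weighted integral is dominated by $\int_M e^{(2\alpha-\frac14)z}\,dM$, which converges exactly when $2\alpha<\tfrac14$, that is $\alpha<\tfrac18$. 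Thus the stated threshold is exactly what makes the weighted integrals finite and, equivalently, what verifies the hypotheses of the maximum principle of Al\'ias--Caminha--Nascimento (or of an Omori--Yau version) for $\|H\|^2$.

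With integrability secured I would integrate the drift identity against $e^{-\frac14 z}\,dM$: the left-hand side vanishes by the weighted divergence theorem (no boundary term, by the integrability just obtained), while the right-hand side is a sum of non-negative quantities. Hence each summand must vanish, forcing $\|H\|^2\equiv 0$ and therefore $\vec H\equiv 0$. By~(\ref{self}) this gives $X^{\perp}\equiv 0$, so the position vector is everywhere tangent to $M$ and $M$ is a minimal cone with vertex at the origin; since $M$ is spacelike, smooth and closed in the Euclidean topology, the origin is a smooth point and a smooth cone smooth at its vertex is a linear subspace. Therefore $M$ is an affine $n$-plane.

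I expect the genuine difficulty to lie in the Lorentzian signature rather than in the formal computation. In the Riemannian theory the weight $e^{-\frac14|X|^2}$ decays and finiteness of the weighted volume is automatic, but here $\langle X,X\rangle$ need not be bounded below along a spacelike submanifold, so the weight may grow and none of the integrals converge gratuitously. Overcoming this is the real content of the hypotheses: one must show that the growth bound on $\|H\|^2$, the structural identity $\mathcal{L}(\tfrac12\langle X,X\rangle)=n-\tfrac12\langle X,X\rangle$, and closedness in the Euclidean topology (which plays the role usually supplied by metric completeness or properness) together guarantee both the integrability above and the applicability of the chosen maximum principle. The value $\alpha=1/8$ is the sharp balance in this estimate, which is why it appears in the statement.
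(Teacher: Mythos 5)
First, a point of comparison: the paper does not prove this statement at all --- it is quoted verbatim from \cite{Liu-Xin:2016} as background motivation --- so your attempt can only be measured against Liu--Xin's original integral-estimate proof and against the (completeness-based) techniques the paper uses for its own theorems. Your skeleton does match the original method in its essentials: the drift operator $\mathcal{L}=\Delta-\tfrac12\langle X,\nabla\,\cdot\,\rangle$, the structural identities $\mathcal{L}z=2n-z$ and $\langle X^{\perp},X^{\perp}\rangle=-4\|H\|^2$ (whence $|\nabla z|^2=4z+16\|H\|^2$), the weight $e^{-z/4}$, the threshold $2\alpha<\tfrac14$ as the source of $\alpha<1/8$, and the endgame $X^{\perp}\equiv0$, so that $M$ is a cone which, being closed in the Euclidean topology, passes smoothly through the origin and is affine. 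One structural correction: in this signature the quartic term in the Simons-type identity for $\vec H$ is $+2\sum_{i,j}\langle\vec H,A_{ij}\rangle^2\ge0$, i.e.\ it \emph{helps}; it is not an error term $\mathcal{R}$ to be dominated, and one gets cleanly $\mathcal{L}\|H\|^2\ge 2\|\nabla^{\perp}\vec H\|^2+\|H\|^2$ (coefficient $1$, not $2$). The hypothesis on $\alpha$ is needed to control the cutoff and gradient terms in the integration by parts, not $\mathcal{R}$.

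There are two genuine gaps. (1) \emph{Completeness.} Closedness in the Euclidean topology does not imply that the induced Riemannian metric is complete, so neither an Omori--Yau type principle nor the Al\'ias--Caminha--Nascimento lemma \cite{Alias-Caminha-Nascimento:2019} (which requires a complete noncompact manifold) may be invoked; this is precisely why Liu--Xin's theorem is distinct from the completeness result of \cite{QH:16}. The correct substitute, which you gesture at in your last paragraph but never implement, is that $z$ is a nonnegative proper exhaustion function (by \cite{JostXin2001}), so all cutoffs must be taken as functions $\phi(z)$ supported in the compact sublevel sets $\{z\le R\}$, with no appeal to completeness anywhere. (2) \emph{Integrability.} You assert that the relevant weighted integral ``is dominated by $\int_M e^{(2\alpha-1/4)z}\,dM$, which converges exactly when $2\alpha<1/4$''; but pointwise decay of the integrand in $z$ does not imply convergence of the integral over $M$. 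One needs a weighted volume estimate, i.e.\ a lemma of the form $\int_M e^{-\beta z}\,dM<\infty$ for suitable $\beta>0$ (equivalently, control of ${\rm vol}\{z\le R\}$), which must itself be extracted from the divergence identity
\begin{equation*}
{\rm div}\bigl(e^{-\delta z}\nabla z\bigr)=e^{-\delta z}\Bigl(2n+(1-4\delta)\|X^{\perp}\|^2-4\delta z\Bigr)
\end{equation*}
together with properness of $z$. Likewise, ``no boundary term, by the integrability just obtained'' is not a valid step on a possibly incomplete manifold: the boundary terms must be killed by the compactly supported cutoffs, and it is exactly in estimating the resulting term $\int_M\|H\|^2|\nabla\phi|^2e^{-z/4}\lesssim R^{-2}\int_M e^{\alpha z}\bigl(z+4e^{\alpha z}\bigr)e^{-z/4}$ that the constant $1/8$ is consumed. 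Your sketch identifies the right threshold and the right mechanism, but the two steps it leaves unproved are the actual content of Liu--Xin's proof.
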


In~\cite{QH:16}, Chen and Qiu proved that any complete $n$-dimensional spacelike self-shrinkers of $\mathbb{R}^{n+p}_{p}$ must be flat. Afterwards, Luo and Qiu~\cite{Luo-Qiu:2020} used an integral method to prove other rigidity theorem for spacelike self-shrinkers under a suitable constraint on the norm of the mean curvature vector, that is:

\begin{theorem}[Q. Chen, H.B. Qiu in \cite{QH:16}]
Let $X: M^n \longrightarrow \mathbb{R}^{n+p}_p$ be a $\xi-$submanifold which is closed	with respect to the Euclidean topology. Assume that the origin $0 \in M$ and there exists an $\varepsilon \in (0,1]$ and $\alpha < \varepsilon / 16$ such that 
$$
||H|| \leq C_ 0 ||B||^{1-\varepsilon} e^{\alpha z},
$$
where $C_0$ is a positive constant. Then $M^n$ must be an affine $n-$plane.
\end{theorem}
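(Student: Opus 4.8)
The plan is to run a Simons-type argument for the drift Laplacian and to extract rigidity from the favorable sign that the Lorentzian structure imposes on the curvature terms. Write $\mathcal{L} = \Delta - \tfrac12\langle X^{T}, \nabla\,\cdot\,\rangle$ for the drift Laplacian attached to the soliton; it is formally self-adjoint with respect to the weighted measure $e^{-z/4}\,d\mu$, where $z$ is the function appearing in the hypothesis. Since $\Delta X = \vec{H}$ and the $\xi$-submanifold condition reads $\vec{H} = -\tfrac12 X^{\perp} + \xi$ with $\xi$ parallel, one first records the soliton identities $\mathcal{L}X = -\tfrac12 X + \xi$ and, after contracting and using that the origin lies in $M$, a formula of the shape $\mathcal{L}z = 2n - z + (\text{terms linear in }\xi)$. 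These will govern how the exponential weight interacts with the maximum principle.

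The heart of the argument is the Bochner/Simons identity for $\|B\|^2$ under $\mathcal{L}$. Carrying out the computation with the Codazzi and Gauss equations of the spacelike immersion, and using crucially that the induced metric on the normal bundle is \emph{negative definite}, one obtains a lower bound of the schematic form
\[
\tfrac12\,\mathcal{L}\|B\|^{2} \;\geq\; \|\nabla B\|^{2} - \tfrac12\|B\|^{2} + c\,\|B\|^{4} - C\,\|H\|^{2}\|B\|^{2},\qquad c>0.
\]
The point — and the reason rigidity holds in the pseudo-Euclidean setting where it fails in the Euclidean one — is that the quartic curvature terms assemble with a favorable (non-negative) sign dictated by the sign of the normal metric, so that the only genuinely obstructing contribution is the one carrying the mean curvature. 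Here the hypothesis enters decisively: substituting $\|H\| \leq C_0\|B\|^{1-\varepsilon}e^{\alpha z}$ gives $\|H\|^{2}\|B\|^{2} \leq C_0^{2}\|B\|^{4-2\varepsilon}e^{2\alpha z}$, whose exponent $4-2\varepsilon$ is strictly below the $4$ carried by the good term, so the obstruction is of lower order in $\|B\|$ up to the exponential weight. The threshold $\alpha < \varepsilon/16$ is exactly what keeps this weighted comparison under control: when $\varepsilon = 1$ the condition reduces to $\|H\|^{2} \leq C_0^{2} e^{2\alpha z}$ with $2\alpha < 1/8$, recovering the rate of the first theorem above, and the factor $\varepsilon$ simply accounts for the deficit $\|B\|^{1-\varepsilon}$.

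With this differential inequality in hand I would conclude by a maximum principle for $\mathcal{L}$. Because $\|B\|$ need not be bounded a priori, one applies the principle to a suitably tempered function built from $\|B\|^{2}$ and the weight $e^{-z/4}$, whose value of $\mathcal{L}z$ computed above, together with the growth hypothesis, renders it bounded above and transfers the lower bound. Invoking the Omori--Yau type maximum principle for the drift Laplacian of Chen--Qiu~\cite{QH:16} (the Euclidean-closedness assumption supplying the completeness/properness it requires) — or, equivalently, the integral formulation of Al\'ias--Caminha--Nascimento~\cite{Alias-Caminha-Nascimento:2019}, for which $\alpha<\varepsilon/16$ guarantees the integrability making the boundary terms vanish — one evaluates the inequality along the maximizing sequence, where $\mathcal{L}$ of the test function is asymptotically non-positive while $\|\nabla B\|^{2}\geq 0$. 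The coercive quartic term then forces $\|B\|\equiv 0$, so $M^{n}$ is totally geodesic; being complete and spacelike, it is an affine $n$-plane.

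The main obstacle is the Simons computation itself: tracking every sign produced by the indefinite ambient metric so as to be certain that the quartic terms aggregate non-negatively, and isolating precisely the mean-curvature term that the hypothesis is tailored to absorb. The second, more delicate, subtlety is the weighted bookkeeping — constructing the tempered test function, verifying its boundedness, and checking that $\alpha < \varepsilon/16$ is the sharp threshold making the weighted error negligible against the drift decay — so that the chosen maximum principle yields the vanishing of $\|B\|$ rather than a mere a priori bound.
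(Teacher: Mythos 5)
First, a point of reference: the paper never proves this statement---it is quoted in the introduction as background (and, per the surrounding text, it is the result proved by Luo--Qiu \cite{Luo-Qiu:2020} by an \emph{integral} method, with the $\xi$-submanifold version in \cite{Wang:2022}), so your attempt has to be measured against that integral-method proof rather than anything in this paper. Measured that way, your Simons-inequality skeleton is the right starting point, and your consistency check on the threshold is sound (for $\varepsilon=1$ one gets $2\alpha<1/8$, matching Liu--Xin \cite{Liu-Xin:2016}; in general $\alpha<\varepsilon/16$ is exactly what makes $e^{\frac{4\alpha}{\varepsilon}z}$, produced by Young's inequality with exponents $\tfrac{2}{2-\varepsilon}$ and $\tfrac{2}{\varepsilon}$ from $\|H\|^2\|B\|^2\leq C_0^2\|B\|^{4-2\varepsilon}e^{2\alpha z}$, dominated by the Gaussian weight $e^{-z/4}$). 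But your schematic inequality carries a fatal sign: in the pseudo-Euclidean setting the quadratic term is \emph{favorable}, as in the paper's self-shrinker computation \eqref{eq4}--\eqref{ineqaux2}, where $\Delta_V|A|^2\geq \tfrac{2}{p}|A|^4+|A|^2$. With your $-\tfrac12\|B\|^2$, the endgame ``the coercive quartic term forces $\|B\|\equiv 0$'' is simply false: evaluating along a maximizing sequence would give $c\,S^2\leq \tfrac12 S+(\text{lower order})$ for $S=\sup\|B\|^2$, i.e.\ a gap theorem bounding $S$, not rigidity. The positive sign of the quadratic term is not bookkeeping; it is what converts the maximum-principle output into $S=0$.

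The second gap is that no maximum principle you invoke is actually applicable under the stated hypotheses. Qiu's principle \cite{Qiu:2021} (Lemma \ref{lemaDelta_V}) requires $\|H\|\leq C(d+1)$, which does not follow from $\|H\|\leq C_0\|B\|^{1-\varepsilon}e^{\alpha z}$ since $\|B\|$ is not assumed bounded and $e^{\alpha z}$ grows superlinearly; moreover its conclusion only covers test functions $f$ with $f/\log(d+1)\to 0$, which $\|B\|^2$ need not satisfy. Chen--Qiu's principle \cite{QH:16} (Lemma \ref{lema2}) requires $\operatorname{Ric}_V\geq -F(r)g$, and---as the paper's own Theorem \ref{thm3} shows---verifying this needs an a priori decay bound on $|A|$, absent here; the Al\'{\i}as--Caminha--Nascimento lemma requires $|A|$ to converge to zero at infinity, also absent. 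Your proposed fix, a ``suitably tempered'' function built from $\|B\|^2$ and $e^{-z/4}$, is exactly where the entire difficulty of the theorem lives, and it is left unconstructed (a tempered $e^{-z/4}\|B\|^2$ may have supremum escaping to infinity or collapsing to $0$ with no information extracted). The literature's proof avoids pointwise principles altogether: multiply the Simons inequality by $\eta^2 e^{-z/4}$ (the weight for which $\Delta_V$ is self-adjoint, as you note), integrate by parts, absorb the mean-curvature term by the Young-inequality step above, and use properness of $z$ and the weighted volume control coming from Euclidean closedness (Jost--Xin) to kill the cutoff terms; the hypothesis $0\in M$ enters to guarantee $z\geq 0$. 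To repair your argument you would either have to carry out that integral scheme, or prove a new Omori--Yau principle whose hypotheses follow from the stated growth condition---neither is supplied.
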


Motivated by the aforementioned works, we investigate the rigidity of spacelike self-shrinkers immersed in the $(n+p)-$dimensional pseudo-Euclidean space $\mathbb{R}^{n+p}_p$ with index $p$. Initially, assuming appropriate boundedness conditions on the mean curvature vector, we apply an Omori--Yau type maximum principle for the operator $\Delta_V$, due to Qiu~\cite{Qiu:2021}, to prove that spacelike hyperplanes are the only $n$-dimensional spacelike self-shrinkers immersed in $\mathbb{R}^{n+p}_p$. Subsequently, under suitable boundedness conditions on the second fundamental form, we employ another Omori--Yau type maximum principle, established by Chen and Qiu~\cite{QH:16}, to derive a similar rigidity result. Finally, by imposing a geometric condition on the second fundamental form and employing a maximum principle developed by Alías, Caminha, and Nascimento~\cite{Alias-Caminha-Nascimento:2019} which is based on the notion of functions converging to zero at infinity on complete noncompact Riemannian manifolds we obtain additional rigidity results concerning spacelike self-shrinkers in $\mathbb{R}^{n+p}_p$. Our main results are stated in the following three theorems:

\begin{theorem}\label{thm1}
Let $X:M^n\looparrowright\mathbb R^{n+p}_{p}$ be a spacelike self-shrinker, which is closed with respect to the Euclidean topology and assume that origin $0 \in M$. If there exists a constant $C>0$, such that 
$$||H||\leq C(d+1), \quad  d = \langle X, X \rangle$$ 
where $d$ is the pseudo-distance function, then $M^n$ is an $n$-dimensional spacelike hyperplane of $\mathbb R^{n+p}_{p}$.
\end{theorem}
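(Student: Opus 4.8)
The plan is to carry out the whole argument through the drift Laplacian $\Delta_{V}=\Delta-\tfrac12\langle X,\nabla\,\cdot\,\rangle$ attached to the soliton equation \eqref{self}, whose Omori--Yau principle is supplied by Qiu \cite{Qiu:2021}; here the drift field is the tangential position $V=\tfrac12 X^{T}$, and I write $\mathcal{L}=\Delta_{V}$. First I would record the soliton identities. From $\Delta X=\vec H=-\tfrac12 X^{\perp}$ and $\nabla_{X^{T}}X=X^{T}$ one gets $\mathcal{L}X=\vec H-\tfrac12 X^{T}=-\tfrac12 X$, and then, using $\nabla d=2X^{T}$ and $|\nabla X|^{2}=n$,
\begin{equation*}
\mathcal{L}d=2\langle\mathcal{L}X,X\rangle+2|\nabla X|^{2}=2n-d .
\end{equation*}
Since $M$ is spacelike its normal bundle is timelike, so $\langle X^{\perp},X^{\perp}\rangle\le 0$ and hence $\|H\|^{2}=-\tfrac14\langle X^{\perp},X^{\perp}\rangle=\tfrac14\bigl(|X^{T}|^{2}-d\bigr)\ge 0$; this favorable sign is the analytic engine behind the rigidity (cf.\ Chen--Qiu \cite{QH:16}).

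Next I would compute the drift Laplacian of the curvature quantity $\|H\|^{2}$ (equivalently of $|X^{\perp}|^{2}$). Differentiating \eqref{self} and invoking the Gauss, Codazzi and Ricci equations of the spacelike immersion, together with $\mathcal{L}X=-\tfrac12 X$, produces a Simons--Bochner type formula of the schematic shape
\begin{equation*}
\mathcal{L}\|H\|^{2}=2|\nabla^{\perp}\vec H|^{2}+\|H\|^{2}+\mathcal{Q}(B,\vec H),
\end{equation*}
in which $\mathcal{Q}$ gathers the curvature contractions of the second fundamental form $B$ against $\vec H$. The decisive point is that, the normal bundle being negative definite, $\mathcal{Q}$ carries a definite sign, so that after discarding the nonnegative gradient term I expect a clean differential inequality $\mathcal{L}\|H\|^{2}\ge \|H\|^{2}$ (up to a fixed positive factor).

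To turn this into vanishing of $\|H\|$ I would not apply the maximum principle to $\|H\|^{2}$ itself, which need not be bounded, but to the normalized function $\varphi=\|H\|^{2}/(1+d)^{2}$, which by the hypothesis $\|H\|\le C(d+1)$ is bounded above by $C^{2}$ (note that the hypothesis already presupposes $d+1>0$). Applying Qiu's Omori--Yau principle to $\varphi$ furnishes a sequence $(x_{k})$ along which $\varphi(x_{k})\to\sup\varphi$, $|\nabla\varphi|(x_{k})\to 0$ and $\limsup\mathcal{L}\varphi(x_{k})\le 0$. Expanding $\mathcal{L}\varphi$ by the quotient rule and substituting $\mathcal{L}d=2n-d$ together with the inequality for $\mathcal{L}\|H\|^{2}$, the leading term is a positive multiple of $\sup\varphi$, forcing $\sup\varphi\le 0$ and hence $\|H\|\equiv 0$ on $M$.

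Finally, $\|H\|\equiv 0$ forces $X^{\perp}\equiv 0$ by \eqref{self}, so the position vector is everywhere tangent and $M$ is a cone with vertex at the origin $0\in M$; smoothness at the vertex of a complete spacelike submanifold then forces $M$ to be a linear subspace, i.e.\ a spacelike $n$-plane (equivalently, $M$ is a complete maximal spacelike submanifold and one may invoke Cheng--Yau \cite{ChengYau1976} and Jost--Xin \cite{JostXin2001}). I expect the genuine difficulty to lie in the second and third steps: establishing the Simons--Bochner inequality with the correct signature-dependent sign for $\mathcal{Q}$, and then verifying that the error terms produced by differentiating the quotient $\varphi$ are controlled precisely at the borderline growth $\|H\|\le C(d+1)$, so that they do not survive in the limit along $(x_{k})$. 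Checking that this growth is exactly what Qiu's version of the Omori--Yau principle tolerates for $\Delta_{V}$ is the crux of the matter.
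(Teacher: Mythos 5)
Your preliminary identities are all correct ($\Delta_V X=-\tfrac12X$, $\Delta_V d=2n-d$ with $V=\tfrac12X^{\top}$), and so is the sign claim in your Bochner step: for spacelike self-shrinkers the timelike normal bundle gives $\Delta_V\|H\|^2=\|H\|^2+2\|\nabla^{\perp}H\|^2+2\sum_{i,j}\langle A_{ij},H\rangle^2\ge\|H\|^2$, which is essentially the Chen--Qiu computation \cite{QH:16}. The genuine gap is exactly the step you flag as the crux: applying Lemma~\ref{lemaDelta_V} to $\varphi=\|H\|^2/(1+d)^2$ and asserting that ``the leading term is a positive multiple of $\sup\varphi$.'' Writing $\varphi=uw$ with $u=\|H\|^2$, $w=(1+d)^{-2}$, one has
\[
\Delta_V\varphi=w\,\Delta_V u+u\,\Delta_V w+2\langle\nabla u,\nabla w\rangle,
\qquad
u\,\Delta_V w=\frac{2(d-2n)}{1+d}\,\varphi+\frac{6|\nabla d|^2}{(1+d)^2}\,\varphi .
\]
The first summand of $u\,\Delta_V w$ is of the \emph{same order} as the good term $+\varphi$ and is negative wherever $d<2n$ (at $d=0$ it equals $-4n\varphi$); the cross term involves $\nabla u$, which the Omori--Yau sequence does not kill, since $|\nabla\varphi|(x_k)\to0$ only controls the combination $w\nabla u+u\nabla w$ (one finds $w|\nabla u|$ stays merely bounded, with constants depending on $C$, because $|\nabla d|^2=4(d+4u)$). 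Even after absorbing the cross term with the retained $2\|\nabla^{\perp}H\|^2$ via $|\nabla u|\le2\sqrt{u}\,\|\nabla^{\perp}H\|$, the best one extracts along the sequence is an inequality of the form $0\ge(1-4n)\sup\varphi+c\,(\sup\varphi)^2$, which only yields the upper bound $\sup\varphi\le(4n-1)/c$, not $\sup\varphi=0$. Nothing forces $d(x_k)\to\infty$, so the sequence may sit where the coefficient of $\varphi$ is negative, and the contradiction never materializes.

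The underlying misstep is a misreading of the role of the hypothesis $\|H\|\le C(d+1)$: in Qiu's result it is consumed to \emph{validate the maximum principle itself} (it is a hypothesis of Lemma~\ref{lemaDelta_V}), not to make the test function bounded; entangling $d$ into the test function is precisely what generates the uncontrollable terms. The paper instead applies the principle directly to $|A|^2$, using Xin's Bochner--Simons formula to obtain $\Delta_V|A|^2\ge\frac{2}{p}|A|^4+|A|^2$ as in \eqref{ineqaux2}; the \emph{quadratic} zeroth-order term then forces $0\ge\bigl(\frac{2}{p}\sup|A|^2+1\bigr)\sup|A|^2$, hence $A\equiv0$, and total geodesy gives the hyperplane in one stroke. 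Note also that even if your maximum-principle step were repaired, you would only obtain $H\equiv0$, and would still need your closing cone argument ($X^{\perp}\equiv0$ plus $0\in M$ and smoothness at the vertex imply a plane) --- valid, Chen--Qiu style, but an extra step the paper's route avoids. To be fair, the paper itself does not verify the lemma's requirement $f=o(\log(d+1))$ for $f=|A|^2$; the standard repair is a bounded rescaling such as $|A|^2/(1+|A|^2)$, handled using the $|A|^4$ term and $|\nabla|A|^2|\le2|A||\nabla A|$ --- so your instinct to test with a bounded function is sound, but the normalization must be by the curvature quantity itself, not by $(1+d)^2$.
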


\begin{theorem}\label{thm3}
	Let $X:M^n\looparrowright\mathbb R^{n+p}_{p}$ be a spacelike self-shrinker with  $||X^\perp||^2 < 2p$. Assume that the second fundamental form $A$ satisfies
	$$
	|A| \leq \dfrac{C}{r^{1+\varepsilon}}, \quad \text{and} \quad \rho_0 \geq \left( \dfrac{C_0}{\varepsilon} \right)^{1/\varepsilon},  ~~ C_0 = \sqrt{2p}C,
	$$
	where $C>0$, $\varepsilon > 0$ and  $ r $ is the distance function on $ M^n $ from a fixed point $ x_0 \in M $, then $M^n$ is an $n$-dimensional spacelike hyperplane of $\mathbb R^{n+p}_{p}$.
\end{theorem}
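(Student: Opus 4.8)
The plan is to run an Omori--Yau argument on a bounded geometric quantity attached to the normal data of the immersion. First I would fix the drift operator $\mathcal{L}=\Delta-\tfrac12\langle X^{\top},\nabla\,\cdot\,\rangle$ naturally associated with the weight $e^{-\langle X,X\rangle/4}$, and record the self-shrinker identities $\vec H=-\tfrac12 X^{\perp}$ and $\mathcal{L}X=-\tfrac12 X$. These give $|X^{\perp}|^{2}=4|\vec H|^{2}$, so the hypothesis $|X^{\perp}|^{2}<2p$ says precisely that $u:=|X^{\perp}|^{2}$ is a \emph{bounded} function on $M$. Boundedness of $u$ is exactly the input required to invoke the Omori--Yau type maximum principle of Chen and Qiu~\cite{QH:16} for the operator $\mathcal{L}$.

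The heart of the argument is a Bochner/Simons-type inequality for $u$ on the self-shrinker. Differentiating twice and using the Codazzi and Gauss equations in the indefinite ambient metric together with an evolution identity of the form $\mathcal{L}\vec H=-\tfrac12\vec H+Q(A)$, where $Q(A)$ collects the curvature terms quadratic in the second fundamental form, I expect an inequality of the schematic shape
\begin{equation*}
\mathcal{L}u \;\ge\; 2\,|\nabla^{\perp}\vec H|^{2}+c_{1}\,u-c_{2}\,|A|\,\sqrt{u}\,,
\end{equation*}
in which the linear term in $u$ carries a definite sign and every error term is controlled, through Cauchy--Schwarz, by $|A|$ times lower powers of $u$. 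Producing this inequality with the \emph{correct} sign on the $c_{1}u$ term is the main obstacle: it requires careful bookkeeping of the signs coming from the timelike normal bundle of $\mathbb{R}^{n+p}_{p}$, so that the maximum principle is pushed in the favourable direction rather than against it.

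Granting the inequality, I would apply the Chen--Qiu principle to the bounded function $u$ to obtain a sequence $(x_{k})$ with $u(x_{k})\to\sup_{M}u$, $|\nabla u|(x_{k})\to 0$ and $\limsup_{k}\mathcal{L}u(x_{k})\le 0$; inserting this into the differential inequality reduces everything to dominating the error term $c_{2}|A|\sqrt{u}$ by the decay of $A$. Here the two quantitative hypotheses combine exactly. Since $\sqrt{u}<\sqrt{2p}$ and $|A|\le C\,r^{-(1+\varepsilon)}$, integrating the decay from the radius $\rho_{0}$ gives
\begin{equation*}
\int_{\rho_{0}}^{\infty}\frac{C}{r^{1+\varepsilon}}\,dr=\frac{C}{\varepsilon\,\rho_{0}^{\varepsilon}}\le\frac{C}{C_{0}}=\frac{1}{\sqrt{2p}}\,,
\end{equation*}
by the assumption $\rho_{0}\ge(C_{0}/\varepsilon)^{1/\varepsilon}$ with $C_{0}=\sqrt{2p}\,C$, so that $\sqrt{2p}\int_{\rho_{0}}^{\infty}|A|\,dr\le 1$. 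This is precisely the balance that makes the error subordinate to $c_{1}u$ (along the maximizing sequence the points either escape to infinity, where $|A|\to 0$ by the decay, or the supremum is realized at a finite point where the reaction term is tamed by $|X^{\perp}|^{2}<2p$), and it forces $\sup_{M}u=0$.

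Finally, $u\equiv 0$ yields $X^{\perp}\equiv 0$, hence $\vec H\equiv 0$; running the same scheme once more on $|A|^{2}$ — now with a clean maximal-submanifold Simons identity in which the reaction term vanishes because $|X^{\perp}|^{2}\equiv 0$ — gives $A\equiv 0$. A spacelike self-shrinker with vanishing second fundamental form is totally geodesic, and therefore $M^{n}$ is an affine spacelike $n$-plane of $\mathbb{R}^{n+p}_{p}$, as claimed.
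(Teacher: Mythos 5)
There is a genuine gap at the very first step of your argument: you invoke the Chen--Qiu maximum principle on the grounds that $u=\|X^{\perp}\|^{2}$ is \emph{bounded}, but boundedness of the test function is not the hypothesis of that principle. Lemma~\ref{lema2} is conditional on a Bakry--\'Emery Ricci lower bound $\operatorname{Ric}_V\geq -F(r)g$ together with the integral condition \eqref{maximumprinciple2}, and verifying this curvature bound is precisely where every quantitative hypothesis of the theorem is consumed in the paper's proof: taking $V=-\tfrac{1}{2}X^{\top}$, one computes from \eqref{VRiccTensor} and \eqref{tensor Ricc} that
\begin{equation*}
\operatorname{Ric}_V(e_i,e_i)=\frac{1}{2}+\sum_{\alpha,j}(h^{\alpha}_{ij})^{2}+\sum_{\alpha}X^{\perp}_{\alpha}h^{\alpha}_{ii}\;\geq\;\frac{1}{2}+|A|^{2}-\sqrt{2p}\,|A|\;\geq\;-\frac{C_0}{r^{1+\varepsilon}},
\end{equation*}
where the Cauchy inequality uses $\|X^{\perp}\|^{2}<2p$ and the last step uses $|A|\leq C r^{-(1+\varepsilon)}$; the threshold $\rho_0\geq (C_0/\varepsilon)^{1/\varepsilon}$ is then exactly what makes $\varphi(t)\to+\infty$ in \eqref{maximumprinciple2}. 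In your proposal, the decay of $|A|$ and the condition on $\rho_0$ instead enter through the computation $\sqrt{2p}\int_{\rho_0}^{\infty}|A|\,dr\leq 1$, which is never tied to any hypothesis of the maximum principle: without the $\operatorname{Ric}_V$ verification you have no right to the Omori--Yau sequence $(x_k)$ at all.

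Even granting access to the principle, the rest does not close. Your key differential inequality $\mathcal{L}u\geq 2|\nabla^{\perp}\vec H|^{2}+c_1 u-c_2|A|\sqrt{u}$ is only schematic --- you yourself flag the sign of $c_1$ as ``the main obstacle'' and then grant it --- and even assuming it, inserting the sequence yields only $c_1\sup u\leq c_2\,\limsup_k |A|(x_k)\sqrt{\sup u}$; since the $x_k$ need not escape to infinity, $|A|(x_k)$ need not tend to zero, so your parenthetical alternative (``either escape to infinity \dots\ or the supremum is realized at a finite point'') is an assertion, not an argument. Moreover you target the wrong function: killing $u=\|X^{\perp}\|^{2}$ only gives $\vec H\equiv 0$, and the promised second run on $|A|^{2}$ is again left unproven, with the same unverified curvature hypothesis. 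The paper avoids all of this by applying Lemma~\ref{lema2} directly to $f=|A|^{2}$ (for which $\lim_{x\to\infty}f/\varphi(r)=0$ holds trivially, since the decay hypothesis forces $|A|\to 0$), using the clean Simons-type inequality $\Delta_V|A|^{2}\geq \tfrac{2}{p}|A|^{4}+|A|^{2}$ of \eqref{ineqaux2}, which has no error term: along the sequence, $0\geq \limsup_k\Delta_V|A|^{2}(p_k)\geq \bigl(\tfrac{2}{p}\sup|A|^{2}+1\bigr)\sup|A|^{2}$ at once forces $\sup|A|^{2}=0$, i.e., $M^{n}$ is totally geodesic in a single stroke.
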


\begin{theorem}\label{thm2}
Let $X:M^n\looparrowright\mathbb R^{n+p}_p$ be a spacelike self-shrinker. If the second fundamental form $A$ of $M^n$ is such that $|A|$ converges to zero at infinity, then $M^n$ is an $n$-dimensional spacelike hyperplane of $\mathbb R^{n+p}_p$.
\end{theorem}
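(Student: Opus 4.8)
The plan is to apply the Al\'{i}as--Caminha--Nascimento maximum principle~\cite{Alias-Caminha-Nascimento:2019} to the nonnegative smooth function $u:=|A|^2$. By hypothesis $|A|$ converges to zero at infinity, so $u\ge 0$ converges to zero at infinity as well, which is exactly the decay assumption that principle requires; note also that this phrasing presupposes that $M^n$ is complete and noncompact, the setting in which \cite{Alias-Caminha-Nascimento:2019} is stated. To close the argument it then suffices to exhibit a drift Laplacian $\Delta_V$ and a constant $c>0$ for which $\Delta_V u\ge c\,u$ on $M^n$; the maximum principle will then force $u\equiv 0$, that is $A\equiv 0$.

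First I would introduce the weighted (drift) Laplacian naturally attached to the self-shrinker structure, namely $\Delta_V=\Delta-\tfrac12\langle X,\nabla\,\cdot\,\rangle$, where $V=\tfrac12 X^{\top}$ is built from the tangential part of the position vector; this is the operator that is self-adjoint with respect to the Gaussian-type weight and for which \eqref{self} becomes a clean elliptic identity. The computational heart is then a Simons-type formula for $\Delta_V|A|^2$. Differentiating the structure equation $\vec H=-\tfrac12 X^{\perp}$, commuting derivatives via the Codazzi and Ricci identities, and inserting the Gauss equation of the spacelike immersion, I expect an identity of the schematic form
\begin{equation*}
\tfrac12\,\Delta_V|A|^2=|\nabla A|^2+c_0\,|A|^2+\mathcal{Q}(A),
\end{equation*}
where $c_0>0$ comes from the self-shrinker term and $\mathcal{Q}(A)$ is the quartic curvature expression.

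The decisive point---and the main obstacle---is the sign of $\mathcal{Q}(A)$. In the Euclidean (Riemannian) setting the analogous term is a negative multiple of $|A|^4$ and obstructs any direct maximum principle; here, however, $M^n$ is spacelike, so the normal bundle of the immersion into $\mathbb R^{n+p}_p$ is timelike (negative definite) and the curvature contractions reverse sign, yielding $\mathcal{Q}(A)\ge 0$. Certifying this inequality by careful bookkeeping of the timelike normal signs (equivalently, re-deriving the Simons inequality already exploited by Chen and Qiu~\cite{QH:16} in proving flatness of complete spacelike self-shrinkers) is the part demanding the most care. Granting it, discarding the nonnegative terms $|\nabla A|^2$ and $\mathcal{Q}(A)$ gives $\Delta_V u\ge 2c_0\,u$ with $2c_0>0$, the differential inequality required above.

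With $u=|A|^2\ge 0$ converging to zero at infinity and $\Delta_V u\ge 2c_0\,u$, the maximum principle of \cite{Alias-Caminha-Nascimento:2019} applies. Concretely, if $\sup_M u>0$ then, since $u\to 0$ at infinity, the supremum is attained at an interior point $x_\ast$, where $\nabla u=0$ and $\Delta u\le 0$; hence $\Delta_V u(x_\ast)=\Delta u(x_\ast)\le 0$, contradicting $\Delta_V u(x_\ast)\ge 2c_0\,u(x_\ast)>0$. Therefore $u\equiv 0$, so $A\equiv 0$ and $M^n$ is totally geodesic. Finally, $A\equiv 0$ forces $\vec H\equiv 0$, and then \eqref{self} gives $X^{\perp}\equiv 0$; a totally geodesic submanifold of the flat space $\mathbb R^{n+p}_p$ is an affine $n$-plane, while $X^{\perp}\equiv 0$ pins it through the origin. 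Thus $M^n$ is an $n$-dimensional spacelike hyperplane, as claimed.
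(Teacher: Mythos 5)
Your proposal is correct, and its computational core coincides with the paper's: the same drift operator $\Delta_V$ with $V=\tfrac12 X^{\top}$, and the same Simons-type inequality $\Delta_V|A|^2\geq \tfrac{2}{p}|A|^4+|A|^2$ (the paper's \eqref{ineqaux2}, obtained from Xin's Bochner--Simons formula); the sign reversal of the quartic terms that you single out as the delicate point is exactly what the timelike normal bundle delivers there, following the Chen--Qiu conventions, so granting it with that citation is legitimate rather than a gap. Where you genuinely diverge is the endgame. The paper feeds $W=\nabla|A|^2$ and $u=|A|^2$ into the Al\'{\i}as--Caminha--Nascimento divergence lemma (Lemma~\ref{lemma Alias-Caminha-Nascimento}): from $\operatorname{div}W\geq0$ and $\langle\nabla u,W\rangle=|\nabla|A|^2|^2\geq0$ it concludes $\nabla|A|^2\equiv0$, so $|A|$ is constant, hence zero by the decay hypothesis. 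You instead exploit the strict positivity of the zeroth-order term: since $u\geq0$ converges to zero at infinity on a complete manifold, a positive supremum would be attained at an interior point, where $\nabla u=0$ and $\Delta u\leq0$ give $\Delta_V u\leq0$, contradicting $\Delta_V u\geq u>0$. Note that what you actually run is this classical interior-maximum argument, not the ACN lemma --- which, as quoted in the paper, is a vector-field/divergence statement --- so your attribution is a mislabel, though not an error. Your route is more elementary and in fact cleaner on one point: it bypasses the paper's passage \eqref{div X}--\eqref{divthm2}, where the field $V$ is silently switched from $\tfrac12 X^{\top}$ (for which \eqref{ineqaux2} was derived) to $\nabla|A|^2$ in order to force $\operatorname{div}W\geq0$; your argument needs no such step. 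What the ACN lemma buys in exchange is applicability when one only has $\Delta_V u\geq0$ without a strictly positive zeroth-order term; here strictness is available, so your simpler argument suffices, and your closing observations ($A\equiv0$ gives an affine $n$-plane, while $X^{\perp}\equiv0$ from \eqref{self} forces it through the origin) are correct and slightly more complete than the paper's.
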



\section{Spacelike submanifolds in $\mathbb{R}^{n+p}_ p$ }\label{sec:preliminaries}

We recall that the $(n+p)$-dimensional pseudo-Euclidean space $\mathbb{R}^{n+p}_{p}$ of index $p$ is just the linear space $\mathbb{R}^{n+p}$ endowed with the indefinite flat metric
$$ds^2=\sum_{i=1}^{n}(dx_{i})^{2}-\sum_{\alpha=n+1}^{n+p}(dx_{\alpha})^2.$$

Now, let $X:M^{n}\looparrowright\mathbb{R}^{n+p}_{p}$ be an $n$-dimensional (connected) spacelike submanifold. Using the following convection for indices
\begin{eqnarray*}
1\leq A,B,C,\ldots\leq n+p,\quad 1\leq i,j,k,\ldots\leq n\quad\mbox{and} 
\end{eqnarray*}
$$
n+1\leq \alpha,\beta,\gamma,\ldots\leq n+p,
$$
we choose a local Lorentzian frame field $\{e_i,e_\alpha\}$ in $\mathbb{R}_p^{n+p}$ along the spacelike submanifold $X:M^{n}\looparrowright\mathbb{R}^{n+p}_{p}$ with spacelike frame $\{e_i\}\subset TM$ and timelike frame $\{e_\alpha\}\subset TM^\perp$. Let us also denote by $\{\omega_{B}\}$ the corresponding dual coframe.

The second fundamental form $A$, the curvature tensor $R$ and the normal curvature tensor $R^\perp$ of $M^{n}$ are given by
\begin{eqnarray*}
\omega_{i\alpha}=\sum_{j}h_{ij}^{\alpha}\omega_{j},\quad A=\sum_{i,j,\alpha}h_{ij}^{\alpha}\omega_{i}\otimes\omega_{j}\otimes e_{\alpha},
\end{eqnarray*}
\begin{eqnarray*}
d\omega_{ij}=-\sum_{k}\omega_{ik}\wedge \omega_{kj}-\frac{1}{2}\sum_{k,l}R_{ijkl}\omega_{k}\wedge \omega_{l},
\end{eqnarray*}
\begin{eqnarray*}
d\omega_{\alpha\beta}=-\sum_{\gamma}\omega_{\alpha\gamma}\wedge \omega_{\gamma\alpha}-\frac{1}{2}\sum_{k,l}R_{\alpha\beta kl}^{\perp}\omega_{k}\wedge\omega_{l},
\end{eqnarray*}
where $\{\omega_{BC}\}$ denote the connection $1$-forms on $\mathbb{R}_p^{n+p}$. Moreover, the Gauss equation is given by
\begin{align}\label{equacao de Gauss 1}
R_{ijkl}&=-\sum_{\alpha}(h_{ik}^{\alpha}h_{jl}^{\alpha}-h_{il}^{\alpha}h_{jk}^{\alpha})
\end{align}
and the Ricci equation is
\begin{equation}\label{equacao de Ricci}
R^\perp_{\alpha\beta ij}=\sum_{m}(h^{\alpha}_{mi}h^{\beta}_{mj}-h^{\alpha}_{mj}h^{\beta}_{mi}).
\end{equation}
Denoting by $H^\alpha$ the components of the standard mean curvature vector $\vec{H}$, that
is,
$$
\vec{H} = \sum_\alpha H^\alpha {e}_\alpha = \sum_\alpha \left( \sum_k h^\alpha_{kk} \right) {e}_\alpha,
$$
it is not difficult to verify from \eqref{equacao de Gauss 1} that the components of the Ricci tensor $R_{ik}$ 
satisfy
\begin{equation}\label{tensor Ricc}
R_{ik} = -\sum_\alpha H^\alpha h^\alpha_{ik} + \sum_{\alpha,j} h^\alpha_{ij} h^\alpha_{jk}.
\end{equation}
The covariant derivative of $h^\alpha_{ij}$ is given by
\begin{equation}\label{derivada ricc}
\sum_k h^\alpha_{ijk} \omega_k = dh^\alpha_{ij} + \sum_k h^\alpha_{kj} \omega_{ki} + \sum_k h^\alpha_{ik} \omega_{kj} - \sum_\beta h^\beta_{ij} \omega_{\beta\alpha}. \tag{1.4}
\end{equation}
Hence, it holds the Codazzi equation
\begin{equation}\label{codazzi equation}
h^\alpha_{ijk} = h^\alpha_{ikj}.
\end{equation}
From \eqref{derivada ricc} we obtain the following Ricci formula
\begin{equation}\label{Ricci Formula}
h^\alpha_{ijkl} - h^\alpha_{ijlk} = \sum_m h^\alpha_{mj} R_{mikl} + \sum_m h^\alpha_{im} R_{mjkl} + \sum_\beta h^\beta_{ij} R^\perp_{\alpha\beta kl}.
\end{equation}
Since the Laplacian of $h^\alpha_{ij}$ is defined by $\sum_k h^\alpha_{ijkk}$, from \eqref{equacao de Gauss 1}, \eqref{equacao de Ricci} and \eqref{Ricci Formula} we conclude
$$
\Delta h^\alpha_{ij} = \sum_k h^\alpha_{kkij} - \sum_{k,m,\beta} h^\alpha_{mi} h^\beta_{mj} h^\beta_{kk} - 2 \sum_{k,m,\beta} h^\alpha_{km} h^\beta_{mj} h^\beta_{ik} 
$$
\begin{equation}
+ \sum_{k,m,\beta} h^\alpha_{km} h^\beta_{mk} h^\beta_{ij} + \sum_{k,m,\beta} h^\alpha_{mj} h^\beta_{ki} h^\beta_{mk}. \tag{1.6}
\end{equation}

\section{Main Rigidity Results and Maximum Principles}\label{sec:proof}

Let $(M^n,\langle\,,\rangle)$ be a Riemannian manifold. According to \cite{QH:16}, given a smooth vector field $V$ on $M^n$, the Bakry-Émery-Ricci tensor ${\rm Ric}_V$ as being the following extension of the standard Ricci tensor ${\rm Ric}$
\begin{equation}\label{VRiccTensor}
	\operatorname{Ric}_V := \operatorname{Ric} - \frac{1}{2} \mathcal{L}_V g,
\end{equation}
where $\mathcal{L}_V g$ denotes the Lie derivative of metric tensor $g = \langle, \rangle$. Furthermore, given a function $u \in C^2(M)$, the drift Laplacian operator $\Delta_V$ acting on $u$ is defined by
\begin{equation}\label{Laplacians relation}
	{\Delta}_V u={\rm div}_V(\nabla u)=\Delta u-\langle V,\nabla u \rangle,
\end{equation}
where $\Delta$ is the standard Laplacian of $(M^n,\langle\,,\rangle)$ and the $V-$divergence operator is given by
\begin{equation}\label{div-f}
	{\rm div}_V(W)={\rm div}(W) - \langle V, W \rangle,
\end{equation}
for all tangent vector field $W$ on $M^n$.

In order to prove our first result, we will make use of the following Omori--Yau type maximum principle for the operator $\Delta_V$, which is applicable to spacelike self-shrinkers that are closed with respect to the Euclidean topology, under certain conditions established by H. B. Qiu in \cite{Qiu:2021}.

\begin{lemma}\label{lemaDelta_V}
	Let $X \colon M^m \to \mathbb{R}^{m+p}_p$ be a spacelike self-shrinker, which is closed with respect to the Euclidean topology\footnote{From \cite{JostXin2001}, we know that if $M$ is closed with respect to the Euclidean topology and $o \in M$, the pseudo-distance $z$ is a non-negative proper function on $M$.}. Assume that the origin $o \in M$. If there exists a constant $C > 0$, such that $\|H\| \leq C(d + 1)$, where $d = \langle X, X \rangle$ is the pseudo-distance function. Then for any $f \in C^2(M)$ with 
	$$
	\lim_{x\to\infty} \frac{f(x)}{\log(d(x)+1)} = 0,
	$$
	there exists $\{p_k\} \subset M$, such that
	$$
	\lim_{j\to\infty} f(p_k) = \sup f, \quad 
	\lim_{j\to\infty} |\nabla f|(p_k) = 0, \quad 
	\lim_{j\to\infty} \Delta_V f(p_k) \leq 0.
	$$
\end{lemma}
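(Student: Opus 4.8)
The plan is to realize this statement as an instance of the classical Pigola--Rigoli--Setti scheme for Omori--Yau type principles: exhibit a proper \emph{barrier} function $\gamma$ whose gradient is bounded and whose drift Laplacian is bounded from above, and then run a perturbation argument against $f$. Here the relevant drift field attached to a self-shrinker is $V=\tfrac12 X^{\top}$, the tangential part of the position vector, so that $\Delta_V=\Delta-\tfrac12\langle X^{\top},\nabla\,\cdot\,\rangle$, and the natural barrier is $\gamma=\log(d+1)$ with $d=\langle X,X\rangle\ge 0$ the pseudo-distance, which is proper by the Jost--Xin fact recalled in the footnote.

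First I would record the two basic differential identities. From $\nabla d = 2X^{\top}$ and the Beltrami relation $\Delta X=\vec H$ together with $|\nabla X|^2=m$ one gets $\Delta d = 2\langle \vec H, X^{\perp}\rangle + 2m$. Substituting the self-shrinker equation $\vec H=-\tfrac12 X^{\perp}$, which gives $\langle X^{\perp},X^{\perp}\rangle=-4\|H\|^2$ and hence $\langle \vec H,X^{\perp}\rangle = 2\|H\|^2$ and $\langle X^{\top},X^{\top}\rangle=d+4\|H\|^2$, the $\|H\|^2$ terms cancel in $\Delta_V d=\Delta d-\langle V,\nabla d\rangle$ and yield the clean identity
\[
\Delta_V d = 2m - d .
\]
Pushing this through the chain rule $\Delta_V\gamma=\gamma'(d)\,\Delta_V d+\gamma''(d)\,|\nabla d|^2$ with $\gamma=\log(d+1)$ produces
\[
\Delta_V\gamma = \frac{2m-d}{d+1}-\frac{4\langle X^{\top},X^{\top}\rangle}{(d+1)^2}\le \frac{2m}{d+1}\le 2m,
\qquad
|\nabla\gamma|^2=\frac{4\bigl(d+4\|H\|^2\bigr)}{(d+1)^2}.
\]
This is exactly the step where the hypothesis is used: $\|H\|\le C(d+1)$ forces $\langle X^{\top},X^{\top}\rangle\le(1+4C^2)(d+1)^2$, whence $|\nabla\gamma|^2\le 4(1+4C^2)$ is bounded.

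With the barrier in hand the remainder is standard. Given $f$ with $f/\gamma\to 0$, for each $\varepsilon>0$ set $f_\varepsilon=f-\varepsilon\gamma$; since $f<\tfrac\varepsilon2\gamma$ outside a compact set, $f_\varepsilon\to-\infty$ at infinity, and as $\gamma$ is proper the supremum of $f_\varepsilon$ is attained at some interior point $p_\varepsilon\in M$. There $\nabla f_\varepsilon=0$ and $\Delta_V f_\varepsilon\le 0$ (at an interior maximum $\nabla f_\varepsilon=0$ and $\Delta f_\varepsilon\le 0$, so the first-order drift term vanishes). Translating back gives $|\nabla f(p_\varepsilon)|=\varepsilon\,|\nabla\gamma(p_\varepsilon)|\le 2\varepsilon\sqrt{1+4C^2}$ and $\Delta_V f(p_\varepsilon)\le\varepsilon\,\Delta_V\gamma(p_\varepsilon)\le 2m\varepsilon$; moreover, for any $a<\sup f$ and $\varepsilon$ small, maximality gives $f(p_\varepsilon)\ge \sup_M(f-\varepsilon\gamma)>a$, so $f(p_\varepsilon)\to\sup f$. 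Taking $\varepsilon=1/k$ and $p_k=p_{1/k}$ yields the required sequence.

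I expect the main obstacle to be the geometric step establishing the barrier estimates, specifically extracting the bounded-gradient property of $\log(d+1)$ from the growth hypothesis on $\|H\|$, since without control on $\|H\|$ the tangential term $d+4\|H\|^2$ is unbounded relative to $(d+1)^2$; the concluding perturbation argument, once the identity $\Delta_V d=2m-d$ and the two bounds are in place, is routine.
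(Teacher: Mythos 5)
The paper does not prove this lemma at all---it is quoted from Qiu \cite{Qiu:2021}---but your reconstruction is correct and follows essentially the same route as Qiu's original argument: the smooth proper barrier $\gamma=\log(d+1)$ built from the Jost--Xin pseudo-distance, the cancellation of the $\|H\|^2$ terms yielding $\Delta_V d=2m-d$ for $V=\tfrac{1}{2}X^{\top}$, the growth hypothesis $\|H\|\leq C(d+1)$ used exactly once to bound $|\nabla\gamma|^2=4(d+4\|H\|^2)/(d+1)^2$, and the standard Pigola--Rigoli--Setti perturbation $f-\varepsilon\gamma$. All your identities check out (in particular $\langle X^{\top},X^{\top}\rangle=d+4\|H\|^2$, and the attainment of the maximum of $f-\varepsilon\gamma$ via properness of $d$, which correctly substitutes for any completeness assumption), so the proposal is a sound, self-contained proof of the quoted result.
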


As an application of Lemma \ref{lemaDelta_V}, we are in position to proof our first rigidity Theorem.

\begin{theorem}\label{TH1}
Let $X:M^n\looparrowright\mathbb R^{n+p}_{p}$ be a spacelike self-shrinker, which is closed with respect to the Euclidean topology and assume that origin $0 \in M$. If there exists a constant $C>0$, such that 
$$||H||\leq C(d+1), \quad  d = \langle X, X \rangle$$ 
where $d$ is the pseudo-distance function, then $M^n$ is an $n$-dimensional spacelike hyperplane of $\mathbb R^{n+p}_{p}$.
\end{theorem}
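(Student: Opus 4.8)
The plan is to run an Omori--Yau argument for the drift Laplacian associated with the field $V=\tfrac12 X^{T}$, so that $\Delta_V u=\Delta u-\tfrac12\langle X,\nabla u\rangle$ is exactly the operator of Lemma~\ref{lemaDelta_V}, and to apply that lemma to a bounded renormalization of $\|H\|^{2}$.

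\emph{Step 1 (the geometry of $d$).} Using $\Delta X=\vec H$, the spacelikeness $\sum_i\langle e_i,e_i\rangle=n$, and the shrinker equation $\vec H=-\tfrac12 X^{\perp}$, one gets $\langle\vec H,X\rangle=-\tfrac12\langle X^\perp,X^\perp\rangle=\tfrac12\|X^{\perp}\|^{2}$ and $\nabla d=2X^{T}$, whence
\begin{equation*}
\Delta_V d=2n-d,\qquad |\nabla d|^{2}=4\langle X^{T},X^{T}\rangle=4\bigl(d+\|X^{\perp}\|^{2}\bigr),\qquad \|X^{\perp}\|^{2}=4\|H\|^{2}.
\end{equation*}
In particular the hypothesis $\|H\|\le C(d+1)$ is precisely what bounds $\|X^{\perp}\|^{2}/(d+1)^{2}$, and hence $|\nabla d|/(d+1)$, on all of $M$.

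\emph{Step 2 (a drift--Bochner identity).} Differentiating the shrinker relation $H^{\alpha}=\tfrac12\langle X,e_\alpha\rangle$ and invoking the Codazzi and Ricci equations \eqref{codazzi equation}, \eqref{equacao de Ricci} together with (1.6), I would derive an identity of the form
\begin{equation*}
\Delta_V\|H\|^{2}=2\|\nabla^{\perp}\vec H\|^{2}+2\|H\|^{2}+Q,\qquad Q\ge 0,
\end{equation*}
where nonnegativity of the zeroth-order term and of the quartic remainder $Q$ is forced by the \emph{timelike} character of the normal bundle, i.e.\ the minus signs in \eqref{equacao de Gauss 1} and \eqref{equacao de Ricci}. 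The gradient term must be kept in full, as it is decisive below.

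\emph{Step 3 (maximum principle and balancing).} Put $f=\|H\|^{2}/(d+1)^{2}$; by Step 1 it is bounded by $C^{2}$, so $f/\log(d+1)\to 0$ and Lemma~\ref{lemaDelta_V} provides $p_k$ with $f(p_k)\to\sup f=:f_0$, $|\nabla f|(p_k)\to0$ and $\limsup_k\Delta_V f(p_k)\le0$. Writing $u=\|H\|^{2}$, $w=(d+1)^{2}$, the exact identity $\nabla u=f\nabla w+w\nabla f$ gives $\nabla u\approx f\nabla w$ along $p_k$, and the boundedness of $|\nabla w|/w$ (Step 1) makes the cross terms of $\Delta_V(u/w)$ negligible, so $\Delta_V f\simeq \tfrac1w\bigl(\Delta_V u-f\,\Delta_V w\bigr)$. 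Here $\Delta_V w=2(d+1)(2n-d)+8\bigl(d+\|X^{\perp}\|^{2}\bigr)$, while the Cauchy--Schwarz bound $|\nabla u|^{2}\le 4\|H\|^{2}\|\nabla^{\perp}\vec H\|^{2}$ together with $\nabla u\approx f\nabla w$ yields $2\|\nabla^{\perp}\vec H\|^{2}\gtrsim 8 f\bigl(d+\|X^{\perp}\|^{2}\bigr)$. The two $8f(d+\|X^{\perp}\|^{2})$ contributions cancel, leaving $\Delta_V u-f\Delta_V w\ge 2u+Q-2f(d+1)(2n-d)$; since $f(d+1)(2n-d)\sim -u$ as $d\to\infty$, this is $\gtrsim 4u$, so $\limsup_k\Delta_V f(p_k)\ge 4f_0$. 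Comparing with $\le 0$ forces $f_0=0$, i.e.\ $\|H\|\equiv0$. Finally, $\vec H$ timelike and $\|H\|\equiv0$ give $\vec H\equiv0$, hence $X^{\perp}\equiv0$ by the shrinker equation; the position vector is everywhere tangent, so $M$ is a cone with vertex at $o\in M$, and smoothness at $o$ forces $M$ to be a spacelike $n$-plane.

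The main obstacle is the interplay of Steps~2 and~3: one must establish the drift--Bochner identity with the correct Lorentzian signs and, crucially, \emph{retain} the term $\|\nabla^{\perp}\vec H\|^{2}$, because it is exactly this term that—at the almost-maximal points where $\nabla u\approx f\nabla w$—cancels the otherwise fatal $\|X^{\perp}\|^{2}$-contribution of $\Delta_V(d+1)^{2}$. This cancellation is what makes the conclusion hold for every $C>0$, rather than only under a smallness condition of the type $\alpha<1/8$ appearing in the integral approach of Liu--Xin.
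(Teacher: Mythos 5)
Your Step 3 contains the genuine gap. Writing $u=\|H\|^{2}$, $w=(d+1)^{2}$, $f=u/w$, one has the exact identity $\nabla u-f\nabla w=w\nabla f$. Along the Omori--Yau sequence $\{p_k\}$ produced by Lemma~\ref{lemaDelta_V} you know only $|\nabla f|(p_k)\to 0$, with no rate, while $w(p_k)$ is unbounded whenever $d(p_k)\to\infty$; hence $\nabla u(p_k)\approx f\nabla w(p_k)$ does \emph{not} follow, and the lower bound $2\|\nabla^{\perp}\vec H\|^{2}\gtrsim 8f\bigl(d+\|X^{\perp}\|^{2}\bigr)$ --- the cancellation you yourself call decisive --- is unavailable. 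Without it, the contribution $8f\|X^{\perp}\|^{2}/w=32f^{2}$ survives in $f\Delta_V w/w$, and your comparison only gives $\limsup_k\Delta_V f(p_k)\geq c\,f_0-32f_0^{2}$, which forces $f_0=0$ only when $f_0$ (equivalently the constant $C$) is small --- exactly the Liu--Xin smallness regime you set out to avoid. The complementary case is also broken: if $d(p_k)$ stays bounded, the supremum is attained at some $p_\infty$ where the cancellation \emph{is} exact, but then your inequality reads $0\geq 2u-2f(d+1)(2n-d)$ at $p_\infty$, which yields no contradiction when $d(p_\infty)\lesssim 2n$ (for instance near the origin, where $2n-d>0$); your asymptotic ``$f(d+1)(2n-d)\sim-u$'' silently assumes $d(p_k)\to\infty$, which nothing guarantees. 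Finally, Step 2 is asserted rather than derived; the identity does hold with every term nonnegative in this signature (the negative-definiteness of the normal bundle flips the sign of the quartic term, as in Chen--Qiu), though the coefficient of $\|H\|^{2}$ should be $1$, not $2$ --- a harmless slip, and note that the normal bundle being negative definite is also what legitimizes your Cauchy--Schwarz bound $|\nabla u|^{2}\leq 4\|H\|^{2}\|\nabla^{\perp}\vec H\|^{2}$.

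For comparison, the paper's proof sidesteps all of this machinery: with $V=\tfrac12 X^{\top}$ it applies Xin's Simons-type formula to $|A|^{2}$ rather than to $\|H\|^{2}$, obtaining $\Delta_V|A|^{2}\geq\bigl(\tfrac{2}{p}|A|^{2}+1\bigr)|A|^{2}\geq 0$ (inequality \eqref{ineqaux2}, where again every curvature term carries a favorable sign), and then applies Lemma~\ref{lemaDelta_V} \emph{directly} to $|A|^{2}$: along the Omori--Yau sequence, $0\geq\limsup_k\Delta_V|A|^{2}(p_k)\geq\bigl(\tfrac{2}{p}\sup|A|^{2}+1\bigr)\sup|A|^{2}$ forces $A\equiv 0$, so $M^n$ is totally geodesic and hence a spacelike hyperplane --- no renormalization by $(d+1)^{2}$, no balancing of gradient terms, and no separate cone/maximality argument to upgrade $H\equiv 0$ to flatness (your endgame needs that extra step, since vanishing mean curvature alone does not give total geodesy). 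If you wish to salvage a route through $\|H\|^{2}$, the repair is not the quotient $u/w$ but a reformulation in which the function fed to the maximum principle verifiably satisfies the growth hypothesis $f/\log(d+1)\to 0$ and inherits a one-signed drift Laplacian from your Step 2 identity; the cleanest such choice is the paper's own $|A|^{2}$.
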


\begin{proof}
Considering $V=\frac{1}{2}X^\top$ and choosing a local Lorentzian frame field $\{e_i,e_\alpha\}$ in $\mathbb R_p^{n+p}$ along $X:M^n\looparrowright\mathbb R^{n+p}_{p}$ with spacelike $\{e_i\}\subset TM$ and timelike $\{e_\alpha\}\subset TM^\perp$. From \cite[Proposition 2.1]{Xin:2011} we have the following Bochner-Simons type formula
\begin{align}\label{eqaux1}
\Delta |A|^2&=2|\nabla A|^2+  2\langle \nabla_i \nabla_j H, A_{ij} \rangle +2\langle A_{ij} , H  \rangle\langle A_{ik},A_{jk}\rangle+2|R^{\perp}|^{2}-2\sum_{\alpha,\beta}S^{2}_{\alpha\beta},
\end{align}
where $S_{\alpha\beta}=\displaystyle\sum_{i,j}h_{ij}^{\alpha}h_{ij}^{\beta}$ and $A_{ij}=\left(\overline{\nabla}_{e_{i}}e_{j}\right)^{\perp}=\displaystyle\sum_{\alpha}h_{ij}^{\alpha}e_{\alpha}$.
From the self-shrinker equation \eqref{self} it follows that
\begin{equation}\label{nablaei}
\nabla_{e_i} \nabla_{e_j} H = \dfrac{1}{2} A_{ij} - \langle H, A_{ik} \rangle A_{jk} + \dfrac{1}{2} \langle X, e_k \rangle \nabla_{e_i} A_{jk}.
\end{equation}
By adding \eqref{nablaei} in \eqref{eqaux1} and applying Codazzi equation, we conclude the equality
\begin{equation}\label{eq3}
\Delta |A|^2=2|\nabla A|^2+  |A|^2 + \dfrac{1}{2} \langle X, \nabla |A|^2 \rangle +2|R^{\perp}|^{2}-2\sum_{\alpha,\beta}S^{2}_{\alpha\beta},
\end{equation}
Since  $|A|^2$ is the square of norm of the second fundamental form of $M^n$ in $\mathbb{R}^{n+p}_p$, which is nonnegative, we use the same notation for other timelike quantities. Then, from \eqref{Laplacians relation}, \eqref{eq3} we conclude
\begin{equation}\label{eq4}
\Delta_V |A|^2 = |A|^2 + 2|\nabla A|^2 + 2|R^\perp|^2 + 2\sum_{\alpha,\beta}S^{2}_{\alpha\beta}.
\end{equation}
Taking into account that
\begin{equation}
\sum_{\alpha,\beta}S^{2}_{\alpha\beta}\geq \sum_{\alpha}S^{2}_{\alpha\alpha}\geq \frac{1}{p}\left(\sum_{\alpha}S_{\alpha\alpha}\right)^{2} = \frac{1}{p}|A|^{4},
\end{equation}
from \eqref{eq4} we obtain
\begin{equation}\label{ineqaux2}
\Delta_V |A|^{2}\geq\frac{2}{p}|A|^{4}+|A|^{2} = P(A)  |A|^2 \geq 0,
\end{equation}
where $P(A) = \dfrac{2}{p}|A|^2 + 1$ is strictly positive.
On the other hand, under our hypotheses and by Lemma \ref{lemaDelta_V}, there exists a sequence $(p_k) \in M^n$ such that
\begin{equation}\label{lemaemA2}
\lim_{k} |A|^2(p_k) = \sup_M |A|^2, \quad 
\lim_{k} |\nabla |A|^2 |(p_k) = 0, \quad 
\lim_{k} \Delta_V |A|^2(p_k) \leq 0,
\end{equation}
from \eqref{lemaemA2} and \eqref{ineqaux2} we obtain
$$
0 \geq \lim_{k} \Delta_V |A|^2(p_k) \geq \left( \dfrac{2}{p}(\sup |A|^2) + 1 \right) \sup |A|^2 (p_k) \geq 0.
$$
Therefore, we conclude that $|A|^2$ vanishes identically in $M^n$, which means that $M^n$ is totally geodesic and consequently, $M^n$ should be an $n-$dimensional spacelike hyperplane of $\mathbb{R}^{n+p}_p$.

\end{proof}

In order to prove our second result, we will make use of the following Omori--Yau type maximum principle for the operator $\Delta_V$, which is applicable to spacelike self-shrinkers, under certain conditions established by Chen and Qiu in \cite{QH:16}.

\begin{lemma}\label{lema2}
	Let $(M^m, g)$ be a complete Riemannian manifold, $ V $ a $ C^1 $ vector field on $ M $. If $ \operatorname{Ric}_V \geq -F(r)g$, where $ r $ is the distance function on $ M $ from a fixed point $ x_0 \in M $, $ F: \mathbb{R} \to \mathbb{R} $ is a positive continuous function satisfying
	\begin{equation}\label{maximumprinciple2}
	\varphi(t) := \int_{\rho_0 + 1}^t \frac{dr}{\displaystyle\int_{r}^{\rho_0} F(s) \, ds + 1} \to +\infty \quad (t \to +\infty)
	\end{equation}
	for some positive constant $ \rho_0 $. Let $ f \in C^2(M) $ with $ \displaystyle\lim_{x \to \infty} \frac{f(x)}{\varphi(r(x))} = 0 $, then there exist points \( \{x_j\} \subset M \), such that
	\begin{equation}
	\lim_{j \to \infty} f(x_j) = \sup f, \quad \lim_{j \to \infty} |\nabla f|(x_j) = 0, \quad \lim_{j \to \infty} \Delta_V f(x_j) \leq 0.
	\end{equation}
\end{lemma}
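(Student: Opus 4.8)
The plan is to prove this weak Omori--Yau maximum principle by the standard two-step scheme of Pigola--Rigoli--Setti type: first construct a radial barrier adapted to $\Delta_V$ from the curvature bound $\operatorname{Ric}_V\ge -F(r)g$, and then play this barrier against $f$ in a perturbation (localization) argument. Throughout, $r$ denotes the distance to the fixed pole $x_0$, and one works on $M\setminus(\{x_0\}\cup\operatorname{Cut}(x_0))$, where $r$ is smooth with $|\nabla r|\equiv1$. First I would derive a comparison estimate for $\Delta_V r$. Applying the weighted Bochner formula for the operator $\Delta_V$ to $u=r$ and using $|\nabla r|\equiv1$, the left-hand side vanishes and one is left with an identity of the form
$$0=|\operatorname{Hess}r|^2+(\Delta_V r)'+\operatorname{Ric}_V(\nabla r,\nabla r),$$
where $'$ is differentiation along a unit-speed minimizing geodesic from $x_0$. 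Discarding the nonnegative term $|\operatorname{Hess}r|^2$ and inserting $\operatorname{Ric}_V\ge -F(r)g$ yields the scalar inequality $(\Delta_V r)'\le F(r)$. Integrating from $\rho_0$, where $\Delta_V r$ is bounded over the compact geodesic sphere $\partial B_{\rho_0}(x_0)$ by a constant $C_1\ge 1$, I obtain the drift-Laplacian comparison
$$\Delta_V r\le C_1+\int_{\rho_0}^{r}F(s)\,ds=:g(r).$$

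Next I would turn $g$ into a barrier. Setting $\varphi'=1/g$ recovers, up to the evident orientation of the inner integral, the growth function $\varphi$ of \eqref{maximumprinciple2}; it is increasing, proper, and satisfies $\varphi'\le 1$ since $g\ge 1$. A direct computation combined with the comparison above gives
$$\Delta_V(\varphi\circ r)=\varphi''+\varphi'\,\Delta_V r\le -\frac{F}{g^2}+\frac{1}{g}\cdot g=1-\frac{F}{g^2}\le 1,$$
together with $|\nabla(\varphi\circ r)|=\varphi'(r)\le 1$. Thus $\varphi\circ r$ is a proper function diverging to $+\infty$ with uniformly bounded gradient and with drift Laplacian bounded above by $1$. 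The non-smoothness of $r$ on $\operatorname{Cut}(x_0)$ is handled by Calabi's trick, replacing $r$ near a cut point by a smooth upper support function agreeing with it at the base point, so that all inequalities persist in the barrier sense.

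Finally, for each $\delta>0$ I would apply the barrier to the perturbed function $f_\delta:=f-\delta(\varphi\circ r)$. Since $f/\varphi(r)\to0$ at infinity, $f_\delta=\varphi(r)\big(f/\varphi(r)-\delta\big)\to-\infty$ as $x\to\infty$, so $f_\delta$ attains an interior maximum at some $x_\delta$. At the (support) maximum, $\nabla f(x_\delta)=\delta\nabla(\varphi\circ r)$ and $\Delta_V f(x_\delta)\le\delta\,\Delta_V(\varphi\circ r)\le\delta$, hence $|\nabla f|(x_\delta)\le\delta$ and $\Delta_V f(x_\delta)\le\delta$. Comparing $f_\delta(x_\delta)\ge f_\delta(y)$ for an arbitrary fixed $y$ and letting $\delta\to0$ yields $\liminf_{\delta\to0}f(x_\delta)\ge\sup_M f$, whence $f(x_\delta)\to\sup_M f$; choosing $\delta=1/j$ and $x_j:=x_{1/j}$ produces the required sequence.

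The step I expect to be most delicate is the comparison estimate together with the cut-locus regularity: making the weighted Bochner identity and the Riccati integration rigorous for a merely $C^1$, \emph{non-gradient} field $V$, for which the sharpened bound $|\operatorname{Hess}r|^2\ge(\Delta r)^2/(m-1)$ cannot be used to close a Riccati equation directly in $\Delta_V r$. The saving point is that the barrier requires only an \emph{upper} bound on $\Delta_V r$, so it suffices to discard $|\operatorname{Hess}r|^2\ge0$, which bypasses that difficulty; one must nonetheless keep careful track of the sign conventions linking $\Delta_V$, $\operatorname{div}_V$ and the term $\tfrac12\mathcal{L}_V g$ in $\operatorname{Ric}_V$, so that the lower curvature bound enters the Bochner formula with the correct orientation.
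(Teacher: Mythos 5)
The paper contains no proof of Lemma \ref{lema2}: it is quoted as a known result of Chen--Qiu \cite{QH:16}, so your attempt can only be measured against the original argument, and there your scheme is essentially the canonical one. The comparison estimate for $\Delta_V r$ obtained from the radial Bochner--Riccati identity by discarding $|\operatorname{Hess}r|^2\ge 0$ (you correctly identify that the refinement $|\operatorname{Hess}r|^2\ge(\Delta r)^2/(m-1)$ is unusable for non-gradient $V$, and that only an upper bound on $\Delta_V r$ is needed), the barrier $\varphi\circ r$ with $\varphi'=1/g$, Calabi's trick at the cut locus, and the perturbation $f-\delta\,\varphi\circ r$ together constitute a correct proof in outline, in the Chen--Qiu/Pigola--Rigoli--Setti style.

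Two issues you flag only in passing must, however, be resolved explicitly, because the lemma as printed here is not literally correct. First, the pairing of \eqref{VRiccTensor} with \eqref{Laplacians relation} is internally inconsistent: along a radial geodesic $\partial_r\langle V,\nabla r\rangle=\tfrac12(\mathcal{L}_Vg)(\partial_r,\partial_r)$, so the identity you invoke, $0=|\operatorname{Hess}r|^2+(\Delta_V r)'+\operatorname{Ric}_V(\nabla r,\nabla r)$, holds for the Bakry--\'Emery pairing $\operatorname{Ric}_V=\operatorname{Ric}+\tfrac12\mathcal{L}_Vg$ when $\Delta_V=\Delta-\langle V,\nabla\cdot\rangle$; with the sign of \eqref{VRiccTensor} the curvature hypothesis enters the Bochner formula with the wrong orientation, and the statement then fails: on $\mathbb{R}$ with $V=-x\,\partial_x$ one has $\operatorname{Ric}-\tfrac12\mathcal{L}_Vg\equiv 1$, yet $\Delta_Vf=f''+xf'$ and $f(x)=\tfrac12\log(1+x^2)$ (with, say, $F(r)=(1+r)^{-1/2}$, so that $\varphi(t)\sim\sqrt{t}$ and $f=o(\varphi(r))$) admits no sequence with $f(x_j)\to\sup f$ and $\lim\Delta_Vf(x_j)\le 0$, since $\Delta_Vf\to 1$ at infinity. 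Your proof tacitly uses the consistent tensor, i.e.\ proves the lemma with $V$ replaced by $-V$; this should be stated, not hinted. Second, the inner integral in \eqref{maximumprinciple2} must run from $\rho_0$ to $r$, exactly as in your $g(r)=C_1+\int_{\rho_0}^{r}F$; as printed it is eventually negative unless $F$ is integrable at infinity. A smaller repair: points of $\partial B_{\rho_0}(x_0)$ may lie in the cut locus, so ``$\Delta_V r$ bounded on the compact geodesic sphere'' is not available verbatim; instead take the initial condition at a fixed small radius $\epsilon$, where $\Delta r\le(m-1)/\epsilon+O(\epsilon)$ uniformly over directions and $|V|\le\max_{\bar B_\epsilon}|V|$, observing that only points interior to longer minimizing geodesics occur (or use the index-form bound, as in \cite{QH:16}). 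With these points made explicit, your argument is complete.
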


As an application of Lemma \ref{lema2}, we obtain our second rigidity result.

\begin{theorem}
	Let $X:M^n\looparrowright\mathbb R^{n+p}_{p}$ be a spacelike self-shrinker with  $||X^\perp||^2 < 2p$. If for any constants $C>0$ and $\varepsilon > 0$ such that the second fundamental form $A$ satisfies
	$$
	|A| \leq \dfrac{C}{r^{1+\varepsilon}}, \quad \text{and} \quad \rho_0 \geq \left( \dfrac{C_0}{\varepsilon} \right)^{1/\varepsilon},  ~~ C_0 = \sqrt{2p}C,
	$$
	where $ r $ is the distance function on $ M $ from a fixed point $ x_0 \in M $, then $M^n$ is an $n$-dimensional spacelike hyperplane of $\mathbb R^{n+p}_{p}$.
\end{theorem}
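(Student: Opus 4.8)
The plan is to rerun the Bochner argument from the proof of Theorem~\ref{TH1}, but to extract the maximizing sequence for $|A|^2$ from the Omori--Yau principle of Lemma~\ref{lema2} rather than from Lemma~\ref{lemaDelta_V}. Accordingly I set $V=\tfrac12 X^\top$ and observe that the inequality \eqref{ineqaux2}, namely
\begin{equation*}
\Delta_V|A|^2 \geq \frac{2}{p}|A|^4+|A|^2 \geq 0,
\end{equation*}
was derived in the proof of Theorem~\ref{TH1} using only the self-shrinker equation \eqref{self} and the structure equations of Section~\ref{sec:preliminaries}; it therefore holds for \emph{every} spacelike self-shrinker, independently of the present growth hypotheses. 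The whole argument thus reduces to producing a sequence $\{x_j\}$ along which $|A|^2\to\sup_M|A|^2$ and $\limsup_j\Delta_V|A|^2(x_j)\leq 0$, which is exactly the conclusion of Lemma~\ref{lema2} once its hypotheses are verified for the complete manifold $(M^n,g)$ and the $C^1$ field $V$.

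The crux is to verify the curvature hypothesis $\operatorname{Ric}_V\geq -F(r)g$ of Lemma~\ref{lema2} with $F(r)=C_0/r^{1+\varepsilon}$. Using the definition \eqref{VRiccTensor}, the Gauss identity \eqref{tensor Ricc} for the Ricci tensor, and the relation $\vec H=-\tfrac12 X^\perp$ to rewrite the mean-curvature term, I would compute $\mathcal L_V g$ from the Weingarten-type identity $\nabla_{e_i}X^\top=e_i+\bigl(\overline{\nabla}_{e_i}X^\perp\bigr)^\top$ and collect terms. Both the mean-curvature contribution to \eqref{tensor Ricc} and the shape-operator part of $\tfrac12\mathcal L_V g$ are linear in $\langle X,e_\alpha\rangle h^\alpha_{ik}$, and after simplification one is left with
\begin{equation*}
(\operatorname{Ric}_V)_{ik} = \sum_{\alpha,j} h^\alpha_{ij}h^\alpha_{jk} - \sum_\alpha \langle X,e_\alpha\rangle h^\alpha_{ik} + \tfrac12\,\delta_{ik}.
\end{equation*}
The first term is positive semidefinite and the constant term is $+\tfrac12 g\geq 0$, so only the middle term can be negative; bounding its operator norm by $|X^\perp|\,|A|$ via Cauchy--Schwarz and inserting $|X^\perp|^2<2p$ together with $|A|\leq C/r^{1+\varepsilon}$ gives
\begin{equation*}
\operatorname{Ric}_V \geq \tfrac12\,g-|X^\perp|\,|A|\,g \geq -\,\frac{\sqrt{2p}\,C}{r^{1+\varepsilon}}\,g = -\,\frac{C_0}{r^{1+\varepsilon}}\,g,
\end{equation*}
which is precisely where the constant $C_0=\sqrt{2p}\,C$ originates. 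I expect this tensorial computation to be the main obstacle: one must pin down the correct sign of the constant term so that it does \emph{not} contribute to $F$, since an erroneous $-\tfrac12 g$ would force $F(r)=\tfrac12+C_0/r^{1+\varepsilon}$ and make the denominator in the integral test below change sign.

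It remains to check the integral condition \eqref{maximumprinciple2} for $F(r)=C_0/r^{1+\varepsilon}$ and the growth condition on $f=|A|^2$. Since
\begin{equation*}
\int_r^{\rho_0}F(s)\,ds+1 = 1-\frac{C_0}{\varepsilon}\bigl(\rho_0^{-\varepsilon}-r^{-\varepsilon}\bigr),
\end{equation*}
the hypothesis $\rho_0\geq (C_0/\varepsilon)^{1/\varepsilon}$, equivalently $\tfrac{C_0}{\varepsilon}\rho_0^{-\varepsilon}\leq 1$, keeps this denominator in $(0,1]$ for all $r\geq\rho_0$, so that $\varphi(t)\geq t-\rho_0-1\to+\infty$; this is the exact role of the lower bound on $\rho_0$. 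Moreover $|A|^2\leq C^2/r^{2(1+\varepsilon)}\to 0$ while $\varphi(r(x))\to\infty$, whence $f/\varphi(r)\to 0$ at infinity. Lemma~\ref{lema2} then yields $\{x_j\}\subset M$ with $|A|^2(x_j)\to\sup_M|A|^2$ and $\limsup_j\Delta_V|A|^2(x_j)\leq 0$. Substituting into the Bochner inequality gives
\begin{equation*}
0 \geq \limsup_j \Delta_V|A|^2(x_j) \geq \frac{2}{p}\bigl(\sup_M|A|^2\bigr)^2+\sup_M|A|^2 \geq 0,
\end{equation*}
forcing $\sup_M|A|^2=0$. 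Hence $A\equiv 0$, $M^n$ is totally geodesic, and therefore a spacelike hyperplane of $\mathbb R^{n+p}_p$, as claimed.
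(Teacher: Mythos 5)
Your proposal retraces the paper's own proof step for step: the same Bochner inequality \eqref{ineqaux2}, the same choice $F(r)=C_0/r^{1+\varepsilon}$ in Lemma~\ref{lema2}, the same integral test with the role of $\rho_0\geq(C_0/\varepsilon)^{1/\varepsilon}$, and the same endgame forcing $\sup_M|A|^2=0$. Two details are in fact \emph{better} than the paper's write-up: you insist on one and the same $V$ in $\Delta_V$ and in $\operatorname{Ric}_V$, which Lemma~\ref{lema2} genuinely requires (the paper derives \eqref{ineqaux2} with $V=\tfrac12X^\top$ in Theorem~\ref{TH1} but then computes $\operatorname{Ric}_V$ with $V=-\tfrac12X^\top$); and your verification of \eqref{maximumprinciple2} --- denominator confined to $(0,1]$ for $r\geq\rho_0$, hence $\varphi(t)\geq t-\rho_0-1$ --- is correct where the paper loosely pulls the denominator out as a constant and writes $\varphi(t)=t$.

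However, the step you yourself single out as ``the main obstacle'' is settled by assertion, and as asserted it is false under the paper's stated conventions. Test your identity
\begin{equation*}
(\operatorname{Ric}_V)_{ik}=\sum_{\alpha,j}h^\alpha_{ij}h^\alpha_{jk}-\sum_\alpha\langle X,e_\alpha\rangle h^\alpha_{ik}+\tfrac12\,\delta_{ik},
\qquad V=\tfrac12X^\top,
\end{equation*}
on a spacelike $n$-plane through the origin (a self-shrinker with $A\equiv0$, $X^\perp\equiv0$, $X^\top=X$): there $\operatorname{Ric}=0$ and, since $\overline{\nabla}_{e_i}X=e_i$, one has $\tfrac12\mathcal{L}_{V}g=\tfrac12g$, so the literal definition \eqref{VRiccTensor} gives $\operatorname{Ric}_V=-\tfrac12g$, not $+\tfrac12g$. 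By your own remark this sign is fatal: it forces $F(r)=\tfrac12+C_0/r^{1+\varepsilon}$, and then the denominator in \eqref{maximumprinciple2} turns negative for large $r$, so Lemma~\ref{lema2} as stated no longer applies. The escape is a convention mismatch that the paper handles only implicitly: in Chen--Qiu's original principle the drift operator is $\Delta+\langle V,\nabla\,\cdot\,\rangle$, so with the paper's $\Delta_V=\Delta-\langle V,\nabla\,\cdot\,\rangle$ the curvature hypothesis must be read as $\operatorname{Ric}+\tfrac12\mathcal{L}_Vg\geq-F(r)g$ (equivalently, replace $V$ by $-V$ in \eqref{VRiccTensor}); the paper implements exactly this by inserting $V=-\tfrac12X^\top$ into its minus-sign definition. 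If you redo the computation with that pairing and $V=\tfrac12X^\top$, tracking the timelike signs ($\langle e_\alpha,e_\beta\rangle=-\delta_{\alpha\beta}$, $H^\alpha=-\tfrac12X^\perp_\alpha$, $\langle X^\perp,A_{ii}\rangle=-\sum_\alpha X^\perp_\alpha h^\alpha_{ii}$), the two contributions linear in $X^\perp_\alpha h^\alpha_{ik}$ --- one from \eqref{tensor Ricc} via the self-shrinker equation, one from $\mathcal{L}_Vg$ --- cancel identically, leaving $(\operatorname{Ric}+\tfrac12\mathcal{L}_Vg)_{ik}=\tfrac12\delta_{ik}+\sum_{\alpha,j}h^\alpha_{ij}h^\alpha_{jk}\geq\tfrac12\delta_{ik}$. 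So once the sign is fixed, your middle term is not merely dominated by Cauchy--Schwarz: it vanishes, and with it the actual need for $\|X^\perp\|^2<2p$ and the decay of $|A|$ in the curvature estimate. As written, your key inequality $\operatorname{Ric}_V\geq-C_0r^{-(1+\varepsilon)}g$ is unproved --- and with the paper's literal definitions it is wrong --- so this computation must be carried out explicitly rather than posited.
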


\begin{proof}
Choosing a local Lorentzian frame field $\{e_i,e_\alpha\}$ in $\mathbb R_p^{n+p}$ along $X:M^n\looparrowright\mathbb R^{n+p}_{p}$ with spacelike $\{e_i\}\subset TM$ and timelike $\{e_\alpha\}\subset TM^\perp$.	Following the same reasoning as in the previous theorem, from \eqref{ineqaux2} follows that 
\begin{equation}\label{1estimativa}
\Delta_V |A|^{2}\geq\frac{2}{p}|A|^{4}+|A|^{2} = P(A)  |A|^2 \geq 0,
\end{equation}	
where $P(A) = \dfrac{2}{p}|A|^2 + 1$ is strictly positive. On the other hand, in order to apply Lemma \ref{lema2}, we compute ${\rm Ric}_V$ to obtain a lower bound for a function $F$ that satisfies the assumptions of the Lemma. Considering $V=-\dfrac{1}{2}X^\top$ From \eqref{VRiccTensor} we get
$$
\begin{aligned}
{\rm Ric}_V(e_i, e_i) 
& = {\rm Ric}(e_i, e_i) - \dfrac{1}{2}(\mathcal{L}_V g)(e_i, e_i) \\
& = - \sum_{\alpha} H^\alpha h^\alpha_{ii} + \sum_{\alpha, j} (h^\alpha_{ij})^2 - \langle \overline{\nabla}_{e_i} V, e_i \rangle \\
& = \dfrac{1}{2}\sum_{\alpha} X_\alpha^\perp h^\alpha_{ii} + \sum_{\alpha, j} (h^\alpha_{ij})^2 + \dfrac{1}{2} \langle \overline{\nabla}_{e_i} X^\top, e_i \rangle \\
& = \dfrac{1}{2}\sum_{\alpha} X_\alpha^\perp h^\alpha_{ii} + \sum_{\alpha, j} (h^\alpha_{ij})^2 + \dfrac{1}{2} \langle \overline{\nabla}_{e_i} (X - X^\perp), e_i \rangle, \\
\end{aligned}
$$
since $\overline{\nabla}_{e_i} = X = e_i$ and $\overline{\nabla}_{e_i} X^\perp = - \displaystyle\sum_{\alpha} X^\perp_\alpha h^{\alpha}_{ii} e_i$, simplifying the terms, we get
\begin{equation}
{\rm Ric}_V(e_i, e_i) = \dfrac{1}{2} +  \sum_{\alpha, j} (h^\alpha_{ij})^2 + \sum_\alpha X^\perp_\alpha h^{\alpha}_{ii}.
\end{equation}
applying the Cauchy inequality and given that $||X^\perp ||^2<2p$, $|A|\leq \dfrac{C}{r^{1+ \varepsilon}}$ for $\varepsilon > 0$ and a positive constant $C>0$ , we conclude that
\begin{equation}
{\rm Ric}_V(e_i, e_i) \geq \dfrac{1}{2} + |A|^2 - \sqrt{2p}|A| \geq - \dfrac{C_0}{r^{1+ \varepsilon}},
\end{equation}
where $C_0 = \sqrt{2p}C$. Choosing $F(r) = \dfrac{C_0}{r^{1+\varepsilon}},$ we have
\begin{equation}
{\rm Ric}_V \geq - F(r) g.
\end{equation}
Now, we will show that $F(r)$ satisfies Lemma \ref{lema2}. In fact, 
$$
\int_r^{\rho_0} F(s)\,ds + 1 
= C_0 \int_r^{\rho_0} \frac{1}{s^{1+\varepsilon}}\,ds + 1 
= \frac{C_0}{\varepsilon} \left( \frac{1}{r^{\varepsilon}} - \frac{1}{\rho_0^{\varepsilon}} \right) + 1,
$$
for $r \longrightarrow + \infty$, we obtain that
$$
\int_r^{\rho_0} F(s)ds + 1 = - \dfrac{C_0}{\varepsilon \rho_0^\varepsilon} + 1 > 0,
$$
since $\rho_0 > \left( \dfrac{C_0}{\varepsilon}  \right)^{1/\varepsilon}$. Thus, 
$$
\varphi(t) := \int_{\rho_0 + 1}^t \frac{dr}{\displaystyle\int_{r}^{\rho_0} F(s) \, ds + 1} = \dfrac{1}{ \left( 1 - \dfrac{C_0}{\varepsilon \rho_0^\varepsilon} \right) } \int_{\rho_0 + 1}^t dr = t  \to +\infty \quad (t \to +\infty).
$$
To conclude, note also that for $f = |A|^2 \in C^2(M)$, we get $
\lim\limits_{r \to \infty}\dfrac{|A|^2}{\varphi(r)} = 0.$ Applying Lemma \ref{lema2} there exist a sequence of point $(p_k) \subset M$, such that 
\begin{equation}\label{aplicacao lema 2}
	\lim_{k \to \infty} |A|^2(p_k) = \sup |A|^2, \quad \lim_{k \to \infty} |\nabla |A|^2|(p_k) = 0, \quad \lim_{k \to \infty} \Delta_V |A|^2(p_k) \leq 0.
\end{equation}
From \eqref{aplicacao lema 2} and \eqref{1estimativa} we obtain
$$
0 \geq \lim_{k} \Delta_V |A|^2(p_k) \geq \left( \dfrac{2}{p}(\sup |A|^2) + 1 \right) \sup |A|^2 (p_k) \geq 0.
$$
Therefore, we conclude that $|A|^2$ vanishes identically in $M^n$, which means that $M^n$ is totally geodesic and consequently, $M^n$ should be an $n-$dimensional spacelike hyperplane of $\mathbb{R}^{n+p}_p$.

\end{proof}

Now, let $(M^n,\langle\,,\rangle)$ be a complete noncompact Riemannian manifold and let $d(\,\cdot\,,o):M^n\rightarrow[0,+\infty)$ denote the Riemannian distance of $M^n$, measured from a fixed point $o\in M^n$. According to \cite[Section 2]{Alias-Caminha-Nascimento:2019}, we say that a smooth function $u\in C^{\infty}(M)$ {\em converges to zero at infinity} when it satisfies the following condition
\begin{equation}\label{distance condition}
	\lim_{d(x,o)\rightarrow+\infty}u(x)=0.
\end{equation}
Keeping in mind this previous concept, in order to prove Theorem~\ref{thm2}, we quote a consequence of item $(a)$ of \cite[Theorem $2.2$]{Alias-Caminha-Nascimento:2019} due Alías, Caminha and Nascimento.

\begin{lemma}\label{lemma Alias-Caminha-Nascimento}
Let $(M^n,\langle\,,\rangle)$ be a complete noncompact Riemannian manifold and let $W\in\mathfrak{X}(M)$ be a smooth vector field on $M^n$. Assume that there exists a nonnegative, non-identically vanishing function $u\in C^{\infty}(M)$ which converges to zero at infinity and such that $\langle\nabla u,W\rangle\geq0$. If ${\rm div}W\geq0$ on $M^n$, then $\langle\nabla u,W\rangle\equiv0$ on $M^n$.
\end{lemma}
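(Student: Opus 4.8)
The plan is to run a divergence-theorem argument on the superlevel sets of $u$, using the hypothesis that $u$ converges to zero at infinity to guarantee that these sets are precompact; this is precisely what replaces a compactness or integrability assumption in the classical maximum principle. First I would introduce the auxiliary vector field $Z = uW$ and compute its divergence via the product rule,
$$
\operatorname{div} Z = \operatorname{div}(uW) = u\,\operatorname{div} W + \langle \nabla u, W\rangle .
$$
By hypothesis $u \geq 0$, $\operatorname{div} W \geq 0$ and $\langle \nabla u, W\rangle \geq 0$, so both summands are nonnegative and hence $\operatorname{div} Z \geq 0$ on all of $M^n$.

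Next I would fix a regular value $\varepsilon \in (0,\sup_M u)$ of $u$ (such values exist for almost every $\varepsilon$ by Sard's theorem, and $\sup_M u>0$ since $u$ is non-identically vanishing) and consider the open set $\Omega_\varepsilon := \{x \in M : u(x) > \varepsilon\}$. Because $u$ converges to zero at infinity, outside some compact set one has $u < \varepsilon$; thus $\overline{\Omega_\varepsilon} \subset \{u \geq \varepsilon\}$ is compact, and for a regular value the boundary $\partial\Omega_\varepsilon = \{u = \varepsilon\}$ is a smooth hypersurface. Applying the divergence theorem on the compact manifold-with-boundary $\overline{\Omega_\varepsilon}$ gives
$$
\int_{\Omega_\varepsilon}\operatorname{div} Z\, dM = \int_{\partial\Omega_\varepsilon}\langle Z, \nu\rangle\, dA ,
$$
where $\nu$ is the outward unit normal. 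Along $\partial\Omega_\varepsilon$ the function $u$ increases into $\Omega_\varepsilon$, so $\nu = -\nabla u/|\nabla u|$, and therefore $\langle Z, \nu\rangle = -\tfrac{\varepsilon}{|\nabla u|}\langle \nabla u, W\rangle \leq 0$ by the sign hypothesis on $\langle \nabla u, W\rangle$. Hence the right-hand side is $\leq 0$, while the integrand on the left is $\geq 0$; it follows that $\int_{\Omega_\varepsilon}\operatorname{div} Z\, dM = 0$, and consequently $\operatorname{div} Z \equiv 0$ on $\Omega_\varepsilon$.

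Letting $\varepsilon \to 0^+$ through regular values (which are dense near $0$) I would conclude that $\operatorname{div} Z \equiv 0$ on $\{u > 0\}$. Since $\operatorname{div} Z = u\,\operatorname{div} W + \langle \nabla u, W\rangle$ is a sum of two nonnegative terms, each must vanish there; in particular $\langle \nabla u, W\rangle = 0$ on $\{u > 0\}$. To extend this to all of $M^n$ I would invoke continuity of $\langle \nabla u, W\rangle$: at points of $\{u = 0\}$ that are limits of $\{u > 0\}$ the value is $0$ by continuity, whereas at interior points of $\{u = 0\}$ one has $\nabla u = 0$, so the bracket vanishes trivially. Thus $\langle \nabla u, W\rangle \equiv 0$ on $M^n$.

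I expect the main obstacle to be the justification that the superlevel sets furnish a legitimate exhaustion for the divergence theorem: the precompactness of $\overline{\Omega_\varepsilon}$ must be extracted from the ``converges to zero at infinity'' hypothesis, the smoothness of $\partial\Omega_\varepsilon$ requires restricting to regular values via Sard, and the sign of the boundary flux hinges on orienting $\nu$ against $\nabla u$. Once these points are secured, the nonnegativity of $\operatorname{div}(uW)$ forces the flux and the interior integral to vanish simultaneously, and the conclusion is immediate. Alternatively, the statement is a special case of item $(a)$ of \cite[Theorem 2.2]{Alias-Caminha-Nascimento:2019} and may simply be quoted from there.
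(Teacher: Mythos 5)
Your proof is correct, but note that it cannot be compared with an argument in the paper itself: the paper offers no proof of this lemma, quoting it verbatim as a consequence of item $(a)$ of \cite[Theorem 2.2]{Alias-Caminha-Nascimento:2019} (your closing sentence already identifies this shortcut). So the real comparison is with the original source, whose proof runs along the same lines as yours: convergence to zero at infinity makes the superlevel sets $\Omega_\varepsilon=\{u>\varepsilon\}$ precompact (this is exactly where completeness enters, via Hopf--Rinow, since one only gets $\{u\geq\varepsilon\}$ inside a closed geodesic ball), Sard's theorem makes $\partial\Omega_\varepsilon=\{u=\varepsilon\}$ smooth for almost every $\varepsilon$, and the divergence theorem with outward normal $\nu=-\nabla u/|\nabla u|$ forces the nonnegative interior integral and the nonpositive flux to vanish simultaneously. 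Your one genuine variation --- applying the divergence theorem to $Z=uW$ rather than to $W$ itself --- is a good one: since $\operatorname{div}Z=u\,\operatorname{div}W+\langle\nabla u,W\rangle$ is a continuous nonnegative function on the open set $\Omega_\varepsilon$ with vanishing integral, you read off $\langle\nabla u,W\rangle=0$ pointwise on all of $\Omega_\varepsilon$; applying the theorem to $W$ alone yields the vanishing only on the regular level sets $\{u=\varepsilon\}$, after which a slightly fiddly density-of-regular-values argument is needed to fill in the rest of $\{u>0\}$. Two minor streamlinings: your two-case discussion on $\{u=0\}$ is unnecessary, because $u\geq0$ makes every zero of $u$ a global minimum, so $\nabla u=0$ and hence $\langle\nabla u,W\rangle=0$ at every such point directly; and since the lemma carries no orientability hypothesis, it is worth a word that the divergence theorem on the compact manifold-with-boundary $\overline{\Omega_\varepsilon}$ holds without orientability (integrate against the Riemannian density, or pass to the orientation double cover). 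With those points secured, your argument is a complete and self-contained substitute for the citation.
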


As an application of Lemma \ref{lemma Alias-Caminha-Nascimento}, we obtain our final rigidity result.

\begin{theorem}
	Let $X:M^n\looparrowright\mathbb R^{n+p}_p$ be a spacelike self-shrinker. If the second fundamental form $A$ of $M^n$ is such that $|A|$ converges to zero at infinity, then $M^n$ is an $n$-dimensional spacelike hyperplane of $\mathbb R^{n+p}_p$.
\end{theorem}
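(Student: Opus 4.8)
The plan is to recast the weighted inequality $\Delta_V|A|^2\ge 0$, already obtained in the proofs of the previous two theorems, into the pure divergence form demanded by Lemma~\ref{lemma Alias-Caminha-Nascimento}, and then to run the argument with $u=|A|^2$ playing the role of the function that converges to zero at infinity. Throughout I take $M^n$ complete and noncompact, as is implicit in the hypothesis that $|A|$ converges to zero at infinity.

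First I would reuse the Bochner--Simons computation from Theorem~\ref{TH1}. With $V=\tfrac12 X^\top$ the self-shrinker structure yields, exactly as in \eqref{ineqaux2},
\[
\Delta_V|A|^2\ \ge\ \frac{2}{p}|A|^4+|A|^2\ \ge\ 0 .
\]
The key observation is that $V$ is a gradient field: since $\nabla d=2X^\top$ for $d=\langle X,X\rangle$, one has $V=\nabla\phi$ with $\phi=\tfrac14 d$. Hence $\Delta_V$ is precisely the weighted Laplacian associated with the measure $e^{-\phi}\,dM$, and for every $w\in C^2(M)$ the elementary identity ${\rm div}\!\left(e^{-\phi}\nabla w\right)=e^{-\phi}\Delta_V w$ holds.

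Next I would apply Lemma~\ref{lemma Alias-Caminha-Nascimento} with the choices $u=|A|^2$ and $W=e^{-\phi}\nabla|A|^2$. By hypothesis $|A|\to 0$ at infinity, so $u$ is nonnegative and converges to zero at infinity; if $u\equiv 0$ there is nothing to prove, so assume it is non-identically vanishing. The weighted divergence identity gives ${\rm div}\,W=e^{-\phi}\Delta_V|A|^2\ge 0$, while $\langle\nabla u,W\rangle=e^{-\phi}\bigl|\nabla|A|^2\bigr|^2\ge 0$. All hypotheses of the lemma are thus satisfied on the complete noncompact manifold $M^n$, and we conclude $\langle\nabla u,W\rangle\equiv 0$, that is, $\nabla|A|^2\equiv 0$.

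Finally, $\nabla|A|^2\equiv 0$ forces $|A|^2$ to be constant on the connected manifold $M$; combined with its convergence to zero at infinity this constant must vanish, contradicting the non-identical vanishing of $u$ and therefore confirming $|A|\equiv 0$ directly. Consequently $M^n$ is totally geodesic and hence a spacelike hyperplane of $\mathbb{R}^{n+p}_p$. I expect the only genuinely delicate points to be the verification of the weighted divergence identity ${\rm div}\!\left(e^{-\phi}\nabla w\right)=e^{-\phi}\Delta_V w$ and the bookkeeping that $M$ is noncompact so that ``convergence to zero at infinity'' is meaningful; the sign conventions for the timelike normal quantities $|R^\perp|^2$ and $\sum_{\alpha,\beta}S^2_{\alpha\beta}$ are inherited unchanged from the Bochner-type formula \eqref{eq4}.
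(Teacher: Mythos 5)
Your proof is correct, and while it rests on the same skeleton as the paper's --- the Alías--Caminha--Nascimento Lemma~\ref{lemma Alias-Caminha-Nascimento} applied to $u=|A|^2$ together with the drift inequality \eqref{ineqaux2} --- your choice of vector field is genuinely different and, in fact, sounder. The paper takes $W=\nabla|A|^2$ and deduces ${\rm div}\,W\geq 0$ by rewriting ${\rm div}\,W=\Delta_V|A|^2+|\nabla|A|^2|^2$ ``for $V=\nabla|A|^2$''; but the inequality \eqref{ineqaux2} was established only for $V=\tfrac12X^\top$, since it is precisely the drift term $\tfrac12\langle X,\nabla|A|^2\rangle$ in \eqref{eq3} that gets absorbed, and with the new choice $V=\nabla|A|^2$ the needed positivity $\Delta_V|A|^2\geq 0$ (equivalently $\Delta|A|^2\geq 0$, where the signless term $\tfrac12\langle X^\top,\nabla|A|^2\rangle$ reappears) was never proved. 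Your observation that $V=\tfrac12X^\top=\nabla\phi$ with $\phi=\tfrac14 d$ (since $\nabla d=2X^\top$ for $d=\langle X,X\rangle$) and the resulting conformal weighting $W=e^{-\phi}\nabla|A|^2$, for which ${\rm div}\,W=e^{-\phi}\Delta_V|A|^2\geq 0$ holds exactly, repairs this gap: the exponential factor is precisely what converts the drift inequality into the pure divergence inequality the lemma demands, at no cost to the other hypothesis since $\langle\nabla u,W\rangle=e^{-\phi}|\nabla|A|^2|^2\geq 0$ and $e^{-\phi}>0$ everywhere (note $d$ need not be nonnegative here, as Theorem~\ref{thm2} assumes neither Euclidean closedness nor $0\in M$, but smoothness and positivity of $e^{-\phi}$ are all you use). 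Your endgame --- $\nabla|A|^2\equiv 0$, hence $|A|$ constant, hence zero by convergence to zero at infinity, hence $M^n$ totally geodesic and a spacelike hyperplane --- matches the paper's, and your explicit flagging of completeness and noncompactness of $M^n$ is appropriate, since the lemma requires both.
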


\begin{proof}
Let us suppose by contradiction that $M^n$ is not a spacelike hyperplane of $\mathbb R^{n+p}_{p}$ or, equivalently, that $|A|$ does not vanish identically on $M^n$. By an argument similar to the preceding theorem, the sctrict positivity of $P(A)$ in \eqref{ineqaux2}, we get that $\Delta_V |A|^2 \geq 0$. Thus, taking the smooth vector field $W=\nabla|A|^2$, from \eqref{div-f}, \eqref{Laplacians relation} and \eqref{ineqaux2} follows that
\begin{equation}\label{div X}
0\leq \Delta_V|A|^2 =  {\rm div}_V (\nabla |A|^2) = {\rm div}( \nabla |A|^2 ) - \langle V, \nabla |A|^2 \rangle,
\end{equation}
for $V=\nabla |A|^2$, we conclude that
\begin{equation}\label{divthm2}
{\rm div} W = \Delta_V |A|^2 + |\nabla |A|^2 |^2 \geq 0.
\end{equation}
Moreover, choosing the smooth function $u=|A|^2$ we also have that
\begin{equation}\label{ineqaux1C}
\langle\nabla u,W\rangle=|\nabla|A|^2|^2\geq0.
\end{equation}
Consequently, since we are also supposing that $|A|$ converges to zero at infinity, we can apply Lemma~\ref{lemma Alias-Caminha-Nascimento} to get that
$\langle\nabla u,W\rangle\equiv0$ on $M^n$. So, returning to \eqref{ineqaux1C} we conclude that $|A|$ must be constant on $M^n$. Therefore, from \eqref{distance condition} we have that $|A|$ is identically zero on $M^n$ and, hence, we reach at a contradiction. Therefore, $M^n$ must indeed be a spacelike hyperplane of $\mathbb{R}^{n+p}_p$.
\end{proof}






\begin{thebibliography}{99}
	
\bibitem{AbreschLanger1986}
U. Abresch, J. Langer, {\em The normalized curve shortening flow and homothetic solutions}, Journal of Differential Geometry, \textbf{23}(2) (1986) 175--196.	

\bibitem{Adames2014}
M.R. Adames, {\em Spacelike self-similar shrinking solutions of the mean curvature flow in pseudo-Euclidean spaces}, Communications in Analysis and Geometry, \textbf{22}(5) (2014) 897--929.

\bibitem{Alias-Caminha-Nascimento:2019} L.J. Al\'{\i}as, A. Caminha \and F.Y. do Nascimento,
{\em A maximum principle at infinity with applications to geometric vector fields},
J. Math. Anal. Appl. {\bf474} (2019), 242--247.

\bibitem{Alias-caminha:20} L.J. Al\'{\i}as, A. Caminha \and F.Y. do Nascimento,
{\em A maximum principle related to volume growth and applications},
Ann. Mat. Pura Appl. {\bf200} (2021), 1637--1650.

\bibitem{Angenent1992} 
S. Angenent, {\em Shrinking doughnuts}, in: Nonlinear Diffusion Equations and Their Equilibrium States, vol. 3, Birkhäuser, Boston, 1992, pp. 21--38.

\bibitem{barboza} W.F.C. Barboza and H.F. de Lima, 
{\em On the rigidity of spacelike submanifolds with parallel Gaussian mean curvature vector}, Acta Math. Hungar. \textbf{170}, 323-329 (2023).

\bibitem{CaoLi2013} 
H.D. Cao, H. Li, {\em A gap theorem for self-shrinkers of the mean curvature flow in arbitrary codimension}, Calculus of Variations and Partial Differential Equations, \textbf{46}(3) (2013) 879--889.

\bibitem{ChauChenYuan2012}
A. Chau, J. Chen, Y. Yuan, \textit{Rigidity of entire self-shrinking solutions to curvature flows}, Journal für die reine und angewandte Mathematik , \textbf{664} (2012) 229--239.

\bibitem{Calabi1968}
E. Calabi, {\em Examples of Bernstein problems for some nonlinear equations}, Proceedings of the Symposium on Global Analysis, University of California, Berkeley, 1968.

\bibitem{QH:16} Q. Chen \and H.B. Qiu,
{\em Rigidity of self-shrinkers and translating solitons of mean curvature flows}, Advances in Mathematics {\bf294} (2016), 517--531.

\bibitem{ChengYau1976}
S.Y. Cheng, S.T. Yau, {\em Maximal spacelike hypersurfaces in the Lorentz-Minkowski spaces}, Annals of Mathematics, \textbf{104} (1976) 407--419.

\bibitem{ColdingMinicozzi2012}
T.H. Colding, W.P. Minicozzi II, {\em Generic mean curvature flow I: generic singularities}, Annals of Mathematics, \textbf{175} (2) (2012) 755--833.

\bibitem{Ding-Wang:2010} Q. Ding \and Z. Wang,
{\em On the self-shrinking system in arbitrary codimensional spaces},
arXiv:1012.0429v2, 2010.

\bibitem{DingXin2014}
Q. Ding, Y.L. Xin, {\em The rigidity theorems for Lagrangian self-shrinkers}, 
Journal für die reine und angewandte Mathematik, \textbf{692} (2014) 109--123.

\bibitem{HuangWang2011} 
R. Huang, Z. Wang, {\em On the entire self-shrinking solutions to Lagrangian mean curvature flow}, Calculus of Variations and Partial Differential Equations, \textbf{41} (2011) 321--339.

\bibitem{JostXin2001}
J. Jost, Y.L. Xin, {\em Some aspects of the global geometry of entire space-like submanifolds}, Results in Mathematics, \textbf{40} (2001) 233--245.

\bibitem{Liu-Xin:2016} H.Q. Liu \and Y.L. Xin,
{\em Some results on space-like self-shrinkers},
Acta Math. Sin. {\bf32} (2016), 69--82.

\bibitem{Luo-Qiu:2020} Y. Luo \and H.B. Qiu,
{\em A note on rigidity of spacelike self-shrinkers},
Int. J. Math. {\bf31} (2020), 2050035.

\bibitem{NevesTian2013}
A. Neves, G. Tian, {\em Translating solutions to Lagrangian mean curvature flow}, Transactions of the American Mathematical Society, \textbf{365}(11) (2013) 5655--5680.

\bibitem{Qiu:2021} H.B. Qiu,
{\em A rigidity result of spacelike self-shrinkers in pseudo-Euclidean spaces},
Chin. Ann. Math. Ser. B {\bf42} (2021), 291--296.

\bibitem{Wang2011}
L. Wang, {\em A Bernstein type theorem for self-similar shrinkers}, 
Geometriae Dedicata, \textbf{151} (2011) 297--303.

\bibitem{Wang2011b}
X.J. Wang, \textit{Convex solutions to the mean curvature flow}, 
Annals of Mathematics, \textbf{173}(3) (2011) 1185--1239.

\bibitem{Wang:2022} H. Wang,
{\em A rigidity result of spacelike $\xi$-submanifolds in pseudo-Euclidean spaces},
J. Math. Anal. Appl. {\bf511}(2022), 125976.

\bibitem{Xin:2011} Y.L. Xin, {\em  Mean curvature flow with bounded Gauss image}, Results in Mathematics, {\bf 59}, (2011), 415-436.

\bibitem{Xavier:22} V.M. Xavier,
{\em Sobre Princ\'{\i}pios do M\'{a}ximo Relacionados ao Crescimento de Volume e suas Aplica\c{c}\~{o}es} (in Portuguese),
Ph.D. thesis, Universidade Federal do Cear\'{a}, Fortaleza, 2022 (http://www.repositorio.ufc.br/handle/riufc/69125).

\end{thebibliography}
\end{document}